\theoremstyle{change}
\newtheorem{thm}{Theorem}[section]
\newtheorem{prop}[thm]{Proposition}
\newtheorem{cor}[thm]{Corollary}
\newtheorem{lem}[thm]{Lemma}
\newtheorem{clm}[equation]{}
\newtheorem{slem}[equation]{}
\theoremstyle{definition}
\newtheorem{rem}[thm]{Remark}
\numberwithin{equation}{section}
\def\det{\mathop{\mathrm{det}}\nolimits}
\def\Im{\mathop{\mathrm{Im}}\nolimits}
\def\Coker{\mathop{\mathrm{Coker}}\nolimits}
\def\Stab{\mathop{\mathrm{Stab}}\nolimits}
\def\so{\mathop{\mathrm{SO}}\nolimits}
\def\o{\mathop{\mathrm{O}}\nolimits}
\def\inc{\mathop{\mathrm{inc}}\nolimits}
\def\id{\mathop{\mathrm{id}}\nolimits}
\def\proj{\mathop{\mathrm{proj}}\nolimits}
\def\tchar{\mathop{\mathrm{char}}\nolimits}
\newcommand{\bb}[1]{{\mathbb{#1}}}
\newcommand{\mca}[1]{{\mathcal{#1}}}
\newcommand{\acts}{\!\cdot\! }
\title[On homological stability for $\mathrm{O}_n$ and $\so_n$]
{On homological stability for orthogonal groups and special orthogonal groups}
\author{Masayuki Nakada}
\address{Department~of~Mathematics, Graduate~School~of~Science, Kyoto~University, Kyoto~606-8502, Japan}
\email{masayuki@math.kyoto-u.ac.jp}
\date{\today}
\keywords{Group homology, Homological stability, Scissors congruence.}
\subjclass[2010]{20J05}
\begin{document}

\begin{abstract}
The problem of homological stability
helps us to catch the structure of group homology.
We calculate homological stability of
special orthogonal groups, and we also calculate the stability of orthogonal groups
with determinant-twisted coefficients under a certain good situation.
We also get some results about the structure of these homology.

\end{abstract}

\maketitle

\tableofcontents

\section{Introduction}
\subsection{Notations and Preliminaries}

In this paper, $F$ is an infinite Pythagorean field of $\tchar(F)\neq 2$.
A field $F$ is Pythagorean if the sum of any two squares is a square.
Other than algebraically closed fields, the real numbers $\bb{R}$ is a typical example.
Let $q(x)=\sum_{i=1}^n x_i^2$ be the Euclidean quadratic form on $F^n$.
We denote by $\o_n=\o_n(F,q)$ the corresponding orthogonal group
and by $\so_n=\so_n(F,q)$ the corresponding special orthogonal group of degree $n$.
We denote by $S=S(F^n)$ the unit sphere $\{x\in F^n; q(x)=1\}$.
We write by $\bb{Z}^t$ the determinant-twisted $\o_n$-module
which admits the twisted action by the determinant.

We consider, for any integer $n\geq 0$, that $F^n$ is isometrically embedded in $F^{n+1}$ as $x\mapsto(0,x)$.
This defines an inclusion map
\begin{equation}\label{defofinc}
\inc\colon\o_n\rightarrow\o_{n+1},\qquad A\mapsto\begin{pmatrix}1 & 0\\ 0 & A\end{pmatrix}
\end{equation}
for $n\geq 1$ and, in case $n=0$, we consider $\o_0$ as the trivial group.
The inclusion (\ref{defofinc}) induces the map of homology groups
\begin{equation}
H_i(\inc)\colon H_i(\o_n)\longrightarrow H_i(\o_{n+1})
\end{equation}
in the $i$-th degree.
We understand the coefficient of homology is $\bb{Z}$ if it is omitted.
It is well known that every isomorphic embedding induces an inclusion $\inc\colon\o_n\hookrightarrow\o_{n+1}$
which are conjugate each other by the theorem of Witt and induces the same map in homology.
In the same way, from the inclusion $\inc\colon\so_n\hookrightarrow\so_{n+1}$
we have $H_i(\inc)\colon H_i(\so_n)\rightarrow H_i(\so_{n+1})$.

$H_i(\so_n)$ admits an involution induced from the short exact sequence
\begin{equation}\label{ext}
\xymatrix{
1 \ar[r] & \so_n \ar[r]^i & \o_n \ar[r]^{\det} & \bb{Z}/2 \ar[r] & 1.
}
\end{equation}
Here $\bb{Z}/2$ means the multiplicative group $\{\pm 1\}$,
$i$ is the natural inclusion and $\det$ is the determinant homomorphism.
We denote by $\sigma$ this involution.

\subsection{Results}

The problem of homological stability of $\o_n$ was first studied by Sah in \cite{sah1986} in case $F=\bb{R}$.
Cathelineau generalised the result of Sah for any infinite Pythagorean fields in \cite{cat2007}.
In \cite{cat2007}, Cathelineau proved the following;
let $\o_n=\o_n(F,q)$ be the orthogonal group over an infinite Pythagorean field $F$ with Euclidean quadratic form $q$.
\begin{prop}[\cite{cat2007,sah1986}]\label{catprop}
The map $H_i(\inc)\colon H_i(\o_n)\rightarrow H_i(\o_{n+1})$ is bijective for $i<n$,
and surjective for $i\leq n$.
\end{prop}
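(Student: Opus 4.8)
The plan is to run the standard machine for homological stability: exhibit a highly connected semisimplicial set on which $\o_{n+1}$ acts with point stabilisers the smaller orthogonal groups, and feed the spectral sequence of this action into an induction on the homological degree.

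\emph{The complex.} For $m\geq 1$ let $\mca{X}_\bullet=\mca{X}_\bullet(F^m)$ be the semisimplicial set whose $p$-simplices are the ordered $(p+1)$-tuples $(v_0,\dots,v_p)\in S(F^m)^{p+1}$ spanning a $(p+1)$-dimensional subspace on which $q$ restricts non-degenerately, with faces given by deleting a vertex; this is a combinatorial model of the unit sphere, the ``complex of spherical simplices''. The group $\o_m$ acts on it. Here the Pythagorean hypothesis is used: since every value of $q$ is a square, a non-degenerate $k$-plane --- having an orthogonal basis --- restricts $q$ to a form isometric to the Euclidean form on $F^k$, so the pointwise stabiliser of a $p$-simplex $(v_0,\dots,v_p)$ is the orthogonal group of the \emph{Euclidean} orthogonal complement of $\langle v_0,\dots,v_p\rangle$, hence a conjugate of $\inc^{\,p+1}(\o_{m-p-1})$. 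The action is transitive on vertices (Witt's theorem; stabiliser $\o_{m-1}$) but, the Gram matrices varying, not on $p$-simplices for $p\geq 1$; this is harmless, because the orbit semisimplicial set $\mca{X}_\bullet/\o_m$ is itself highly connected (by the same genericity arguments), so the spectral sequence below will behave as in the transitive case.

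\emph{Connectivity.} The crux is to show that $|\mca{X}_\bullet(F^m)|$ is $(m-2)$-connected. The link of a vertex $v$ is equivalent to $\mca{X}_\bullet(v^\perp)\cong\mca{X}_\bullet(F^{m-1})$, and since $F$ is infinite one can always choose a unit vector avoiding any finite collection of proper subspaces (the affine quadric $q=1$ has a rational point, hence is a rational, therefore Zariski dense, subvariety of $F^m$). These two facts drive an induction on $m$, carried out by a ``bad simplex'' colouring argument of the Hatcher--Wahl and van der Kallen type, or by pushing nullhomotopies across one skeleton at a time, which raises the connectivity step by step. \textbf{This connectivity estimate is the main obstacle.} Over an arbitrary --- possibly non-orderable --- Pythagorean field there is no ambient topology to borrow, so the argument must be purely combinatorial and must exploit the infinitude of $F$ essentially; and one needs the \emph{sharp} bound $m-2$, since a weaker estimate such as $\lfloor(m-2)/2\rfloor$ would yield only half of the claimed stability range.

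\emph{The spectral sequence.} Taking $m=n+1$, the connectivity bound makes the augmented chain complex $\cdots\to\bb{Z}[\mca{X}_1]\to\bb{Z}[\mca{X}_0]\to\bb{Z}\to 0$ of $\o_{n+1}$-modules exact through degree $n-1$, so hyperhomology yields a spectral sequence converging to $0$ in total degrees $\leq n-1$; using Shapiro's lemma and the connectivity of $\mca{X}_\bullet/\o_{n+1}$, its $E^1$-page is, in each weight $q$, computed by the chain complex
\[
\cdots\longrightarrow H_q(\o_{n-2})\longrightarrow H_q(\o_{n-1})\longrightarrow H_q(\o_n)\longrightarrow H_q(\o_{n+1})
\]
with $H_q(\o_{n+1})$ in column $-1$, whose differential out of column $p$ is the alternating sum of the $p+1$ face-inclusion maps $\o_{n-p}\hookrightarrow\o_{n-p+1}$; being conjugate these all induce the common map $H_q(\inc)$, so the differential is $H_q(\inc)$ for $p$ even and $0$ for $p$ odd. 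One now inducts on $i$, the case $i=0$ being trivial: granting the proposition in all degrees $<i$ (for every $n$), the $E^2$-terms $E^2_{p,q}$ with $q<i$ lying strictly below the total-degree-$n$ line are kernels, respectively cokernels, of maps $H_q(\inc)$ that the inductive hypothesis makes injective, respectively surjective, and hence vanish; since the sequence converges to $0$ this forces $E^2_{0,i}=\Ker\bigl(H_i(\inc)\colon H_i(\o_n)\to H_i(\o_{n+1})\bigr)=0$ for $i<n$ and $E^2_{-1,i}=\Coker\bigl(H_i(\inc)\bigr)=0$ for $i\leq n$ --- exactly bijectivity for $i<n$ and surjectivity for $i\leq n$. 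The two borderline cases (surjectivity at $i=n$, injectivity at $i=n-1$) need the customary closer look at the higher differentials.
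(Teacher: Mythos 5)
The statement you set out to prove is, in this paper, not proved at all: it is Proposition~1.1, explicitly labelled as a citation to Cathelineau and Sah, and the body of the paper takes it as a black box. So there is no ``paper's own proof'' to compare against; what you have written is a proposed reconstruction of the external input. For what it is worth, the complex appearing in Section~6 of the paper (the chain complex $C_\ast$ of all ordered tuples of points of $S$) is \emph{not} what you would need here --- the paper's $C_\ast$ includes degenerate configurations, whose stabilisers are not smaller orthogonal groups, and it is used only to extract the $\bb{Z}/2$-factorisation \eqref{quotfactor}, not to drive a stability induction.

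As for your sketch on its own terms: the strategy is the standard one and the numerology is consistent (an $(m-2)$-connected $\o_m$-complex with point stabiliser $\o_{m-1}$ gives slope-one stability), but the argument as written has two real gaps. First, you state the connectivity bound and then concede it is ``the main obstacle'' without proving it. That is not a minor omission --- over a general infinite Pythagorean field there is no topological model of the sphere to appeal to, and the combinatorial connectivity argument (building nullhomotopies by choosing generic unit vectors, controlling the bad simplices, etc.) \emph{is} the substance of Cathelineau's theorem. A proof proposal that leaves this as a black box has merely relocated the statement, not established it. Second, the dismissal of non-transitivity as ``harmless because $\mca{X}_\bullet/\o_m$ is highly connected'' glosses over a genuine issue: once the action is not transitive on $p$-simplices, the $E^1$-row at weight $q$ is a direct sum over orbit representatives, the face maps do not all coincide with $H_q(\inc)$ up to conjugacy (in particular the $-1$-reflection introduces a determinant sign, which is precisely the phenomenon exploited at \eqref{twisted2torsions}), and one must verify that the resulting complex still computes what you claim. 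Both points must be addressed before the proposal can count as a proof.
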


In Sah's paper \cite{sah1986} only $H_2$ of the stability for $\so_n(\bb{R})$ was studied.
In \cite{cat2007} Cathelineau proved the following result for $\so_n=\so_n(F,q)$:
\begin{prop}[\cite{cat2007}]\label{catprop2}
The map $H_i(\inc)\colon H_i(\so_n,\bb{Z}[1/2])\rightarrow H_i(\so_{n+1},\bb{Z}[1/2])$
is bijective for $2i<n$, and surjective for $2i\leq n$.
\end{prop}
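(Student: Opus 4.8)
I would build the argument on a transfer splitting that reduces everything to the already-known case of $\o_n$ together with its determinant-twisted analogue. Since $2$ is invertible in $\bb{Z}[1/2]$ and $[\o_n:\so_n]=2$, the permutation $\o_n$-module $\bb{Z}[1/2][\o_n/\so_n]$ splits off its isotypic pieces via the orthogonal idempotents $\tfrac12(1\pm\det)$, giving
\[
\bb{Z}[1/2][\o_n/\so_n]\;\cong\;\bb{Z}[1/2]\,\oplus\,\bb{Z}^t[1/2]
\]
as $\o_n$-modules. Combining this with Shapiro's lemma $H_i(\so_n,\bb{Z}[1/2])\cong H_i(\o_n,\bb{Z}[1/2][\o_n/\so_n])$ produces a natural decomposition
\[
H_i(\so_n,\bb{Z}[1/2])\;\cong\;H_i(\o_n,\bb{Z}[1/2])\,\oplus\,H_i(\o_n,\bb{Z}^t[1/2]),
\]
which is exactly the decomposition of $H_i(\so_n,\bb{Z}[1/2])$ into the $(+1)$- and $(-1)$-eigenspaces of the involution $\sigma$. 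Because the embedding $\inc$ sends a block matrix $\mathrm{diag}(1,A)$ to a matrix with the same determinant, $\bb{Z}^t$ restricts to $\bb{Z}^t$ along $\inc$, so all the above identifications are compatible with the stabilization maps for $\o_n$ and for $\so_n$. It therefore suffices to establish the stability range separately for each of the two summands.

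For the untwisted summand there is nothing to prove: Proposition~\ref{catprop}, after the flat base change $\bb{Z}\to\bb{Z}[1/2]$, already gives that $H_i(\inc)$ on $H_i(\o_n,\bb{Z}[1/2])$ is bijective for $i<n$ and surjective for $i\le n$, which is stronger than what is claimed. Thus the whole content of Proposition~\ref{catprop2} is concentrated in the determinant-twisted summand, and it is enough to show that
\[
H_i(\inc)\colon H_i(\o_n,\bb{Z}^t[1/2])\longrightarrow H_i(\o_{n+1},\bb{Z}^t[1/2])
\]
is bijective for $2i<n$ and surjective for $2i\le n$. To do this I would run the same strategy that underlies Proposition~\ref{catprop}: let $W_n$ be the semisimplicial set whose $p$-simplices are the ordered $(p+1)$-tuples of pairwise orthogonal unit vectors in $F^n$, on which $\o_n$ acts with the stabilizer of a $p$-simplex isometric to $\o_{n-p-1}$ acting on the orthogonal complement of the spanned subspace. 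Over an infinite Pythagorean field this complex is highly connected — this connectivity estimate is precisely what Cathelineau proves en route to Proposition~\ref{catprop} — and feeding the coefficient module $\bb{Z}^t[1/2]$ into the isotropy spectral sequence of this action yields, in a range of total degrees,
\[
E^1_{p,q}\;=\;H_q\!\bigl(\o_{n-p-1},\,\bb{Z}^t[1/2]\otimes\varepsilon_p\bigr)\;\Longrightarrow\;0,
\]
where $\varepsilon_p$ is the sign character by which the simplex-stabilizer acts on the chosen vertices; since $\bb{Z}^t$ restricts to $\bb{Z}^t$ on each smaller $\o_m$, the coefficient system is self-similar and one can compare the $p=0$ column (which is the target $H_q(\o_{n-1},\bb{Z}^t[1/2])$) with the $p=1$ column and induct on $n$.

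The main obstacle, and the reason the bound is $2i<n$ rather than the optimal $i<n$, is the bookkeeping with the twist. Unlike trivial coefficients, the module $\bb{Z}^t$ interacts nontrivially with the orientation characters $\varepsilon_p$ appearing in the $E^1$-page, so the rows of the spectral sequence are no longer a bar-type complex that collapses cleanly; in effect one is forced to organize the simplicial degrees in pairs so that the determinant contributions and the permutation signs cancel, and it is this pairing that halves the effective connectivity available to the induction. I expect that controlling this interaction — equivalently, verifying that the low-degree twisted homology $H_q(\o_m,\bb{Z}^t[1/2])$ behaves as the inductive step requires (it vanishes for $m\ge 1$ while being $\bb{Z}[1/2]$ for $m=0$, so the relevant comparison complex is not literally the augmented one) — is the step that requires genuine work; everything else is the standard homological-stability machinery. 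Once the twisted statement is in hand, reassembling the two summands of $H_i(\so_n,\bb{Z}[1/2])$ gives bijectivity for $2i<n$ and surjectivity for $2i\le n$.
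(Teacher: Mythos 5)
The paper does not prove this proposition; it cites it from Cathelineau \cite{cat2007} and uses it (through \eqref{catsonstr} and \eqref{catssoninc}) as an input to Theorem~\ref{mainthm1}. So there is no internal proof to compare against, and the attempt has to stand on its own.

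Your opening reduction is correct and, indeed, standard. Since $2$ is invertible in $\bb{Z}[1/2]$ and $[\o_n:\so_n]=2$, the idempotents $\tfrac12(1\pm\det)$ split the $\o_n$-module $\bb{Z}[1/2][\o_n/\so_n]$ as $\bb{Z}[1/2]\oplus\bb{Z}^t[1/2]$, and Shapiro's lemma gives $H_i(\so_n,\bb{Z}[1/2])\cong H_i(\o_n,\bb{Z}[1/2])\oplus H_i(\o_n,\bb{Z}^t[1/2])$ compatibly with $\inc$. This is exactly the $\bb{Z}[1/2]$-version of the splitting the paper encodes integrally in \eqref{seq1}--\eqref{seq2} and \eqref{shaplem}, and the untwisted summand is indeed covered (with the better range $i<n$) by Proposition~\ref{catprop}. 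So far so good.

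The genuine gap is that you never actually establish the twisted stability. The entire content of Proposition~\ref{catprop2} is concentrated in showing that $H_i(\inc)$ on $H_i(\o_n,\bb{Z}^t[1/2])$ is bijective for $2i<n$ and surjective for $2i\le n$ --- equivalently, since by \eqref{quotfactor} the stabilization map factors through $H_i(\o_n,\bb{Z}^t)\otimes\bb{Z}/2$ and so is zero after inverting $2$, that $H_i(\o_n,\bb{Z}^t[1/2])=0$ for $2i<n$. You sketch a spectral sequence but explicitly write that ``controlling this interaction \dots is the step that requires genuine work,'' i.e.\ you stop exactly where the theorem begins. A proof strategy with the hard step left open is not a proof.

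There is also a small technical confusion in the sketch. Because the simplices of your $W_n$ are \emph{ordered} tuples of pairwise orthogonal unit vectors, the stabilizer of a $p$-simplex fixes each vertex, so it acts trivially on the simplex; there is no orientation character $\varepsilon_p$, and the restricted coefficient system on $\o_{n-p-1}$ is simply $\bb{Z}^t[1/2]$ again. The heuristic that ``permutation signs'' force the $2i$ bound is therefore not right as stated. The actual mod-$2$ mechanism is the one the paper exposes in the passage leading to \eqref{twisted2torsions}: in the sphere complex the $d^1$ differential on the bottom row is trivial when $v_0,v_1$ are independent (conjugate by an element of $\so_{n+1}$), but equals multiplication by $2$ on the antipodal orbit $v_0=-v_1$, because the reflection interchanging them has determinant $-1$. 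Propagating that through the spectral sequence to get the $2i<n$ range is precisely the work that is missing.
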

In case $F$ is quadratically closed, it is known that the obstruction to stability for $\so_n$
with coefficient $\bb{Z}[1/2]$ is the Milnor $K$ group $K_n^M(F)$ \cite{cat2007}.

We make precise the result of Cathelineau's result (proposition \ref{catprop2}).
Our result for special orthogonal groups is the following:
\begin{thm}\label{mainthm1}
The map $H_i(\inc)\colon H_i(\so_n)\rightarrow H_i(\so_{n+1})$ is bijective for $2i<n$,
and surjective for $2i\leq n$.
\end{thm}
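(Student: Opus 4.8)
The plan is to deduce the integral statement for $\so_n$ from Cathelineau's result with $\bb{Z}[1/2]$-coefficients (Proposition \ref{catprop2}) together with the integral stability for $\o_n$ (Proposition \ref{catprop}), by carefully controlling the $2$-primary torsion. First I would record the standard fact that, since $\bb{Z}/2$ is finite of order $2$, the Lyndon–Hochschild–Serre spectral sequence of the extension (\ref{ext}) collapses after inverting $2$ and yields a natural isomorphism $H_i(\o_n,M)[1/2]\cong H_i(\so_n,M)^{\bb{Z}/2}[1/2]$ for any coefficient module $M$ on which $2$ is invertible; applying this to $M=\bb{Z}[1/2]$ and to $M=\bb{Z}^t[1/2]$, and using that $\sigma$ acts on $H_i(\so_n,\bb{Z}[1/2])$ with $\pm 1$-eigenspace decomposition, gives $H_i(\o_n,\bb{Z}[1/2])\cong H_i(\so_n,\bb{Z}[1/2])^\sigma$ and $H_i(\o_n,\bb{Z}^t[1/2])\cong H_i(\so_n,\bb{Z}[1/2])_{\sigma}$ (the antiinvariants). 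The key point is that the stabilisation map is $\sigma$-equivariant, so Proposition \ref{catprop2} for $\so_n$ plus Proposition \ref{catprop} for $\o_n$ already pin down the away-from-$2$ behaviour in the stated range; the entire content left to prove is the statement $2$-locally.

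Next I would attack the $2$-local statement. Here the obstacle is that after localising at $2$ the extension (\ref{ext}) no longer splits off cleanly, so one cannot simply separate eigenspaces. My approach is to compare the homology of $\so_n$ with that of $\o_n$ directly through the transfer: the composite $H_i(\o_n)\xrightarrow{\mathrm{res}}H_i(\so_n)\xrightarrow{\mathrm{tr}}H_i(\o_n)$ is multiplication by $2$, and $\mathrm{tr}\circ\mathrm{res}$ on $H_i(\so_n)$ is $1+\sigma$. Combined with Proposition \ref{catprop}, which says $H_i(\o_n)\to H_i(\o_{n+1})$ is an isomorphism in a much wider range ($i<n$) than we need, this reduces the problem to understanding the behaviour of the stabilisation on the "purely $2$-torsion, $\sigma$-antiinvariant" part of $H_i(\so_n)$, i.e. on $\Ker(\mathrm{tr})$ and $\Coker(\mathrm{res})$ in low degrees relative to $n$. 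I expect the decisive input is a geometric/scissors-congruence model: one builds a highly-connected simplicial complex (or poset) on which $\so_n$ acts — the complex of orthonormal-type frames in $S=S(F^n)$, exactly as in the $\o_n$ arguments of Cathelineau and Sah — and runs the associated equivariant homology spectral sequence, noting that the stabilisers of $k$-simplices are copies of $\so_{n-k}$ (rather than $\o_{n-k}$), which is precisely why the stable range halves to $2i<n$. The connectivity estimate for this complex over a Pythagorean field is the technical heart.

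With that spectral sequence in hand, the proof is an induction on $n$: assuming the theorem for all smaller indices, the $E^1$-page in total degree $i$ and the known connectivity force the differentials to identify $H_i(\so_n)$ with $H_i(\so_{n+1})$ in the claimed range, the surjectivity edge case $2i\le n$ coming from the last surviving column exactly as in the proof of Proposition \ref{catprop}. The main obstacle, and where I would spend the most care, is twofold: (i) proving the sharp connectivity of the frame complex for $\so_n$ — one must check that the relevant posets of partial frames in $S(F^n)$ are Cohen–Macaulay of the right dimension, using the Pythagorean hypothesis to guarantee that orthogonal complements again carry anisotropic Euclidean forms and that enough unit vectors exist; and (ii) bookkeeping the $2$-torsion so that the away-from-$2$ argument of the first paragraph and the integral spectral-sequence argument glue to an honest integral statement, rather than merely a $\bb{Z}[1/2]$-statement plus a separate $\bb{Z}_{(2)}$-statement. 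Once the connectivity bound is established the rest is the by-now-standard homological-stability machinery, so I would present (i) as a self-contained lemma and then run the spectral sequence induction to conclude.
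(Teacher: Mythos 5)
Your proposal diverges from the paper in a fundamental way, and the divergence hides a genuine gap.

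Your first paragraph correctly identifies that the $\bb{Z}[1/2]$-information comes from Cathelineau (Proposition~\ref{catprop2}) together with Proposition~\ref{catprop}, and your second paragraph correctly observes that the transfer/restriction formalism for the index-$2$ subgroup $\so_n\subset\o_n$ reduces everything to the $2$-primary, $\sigma$-anti-invariant piece. Up to this point you are on the same track as the paper. But from there you propose to attack the $2$-local piece by constructing a new highly-connected frame complex carrying an $\so_n$-action with stabilizers $\so_{n-k}$ and running the equivariant spectral sequence, flagging the connectivity estimate as the ``technical heart.'' This is not what the paper does, and the plan as stated does not close the gap. The connectivity of the frame/sphere complex is not the obstruction: the same complex $C_\ast$ of tuples in $S(F^n)$ is used for both $\o_n$ and $\so_n$, and its acyclicity does not depend on which group acts. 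What breaks is the analysis of the $E^1$-differential. Over $\so_n$ (or equivalently with twisted coefficients over $\o_n$), when $v_0=-v_1$ there is no orientation-preserving isometry carrying $v_0$ to $v_1$ and centralizing the stabilizer, so $c\otimes(v_1)-c\otimes(v_0)$ is homologous to $2c\otimes(v_1)$ rather than to zero — exactly the computation the paper performs around \eqref{twisted2torsions}. This $2$-torsion defect in the differential is precisely why Cathelineau's direct geometric argument only yields the $\bb{Z}[1/2]$-statement, and running the ``same machinery'' for $\so_n$ integrally would reproduce that obstruction rather than overcome it. Your proposal does not say how the geometric complex would kill this $2$-torsion; it simply asserts that the connectivity estimate plus the $\bb{Z}[1/2]$-half would glue, which is the very thing needing proof.

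The paper avoids building any new complex for $\so_n$. Instead it works entirely with the Lyndon--Hochschild--Serre spectral sequence of $1\to\so_n\to\o_n\to\bb{Z}/2\to1$. Lemma~\ref{lemA} bootstraps integral stability of the coinvariants $H_i(\so_n)_\sigma$ from integral stability of $\o_n$ (Proposition~\ref{catprop}) plus an inductive hypothesis on $\so_n$ in lower degrees, by comparing the two LHS spectral sequences and using that the bottom row $E^2_{0,q}$ already equals $E^\infty_{0,q}$. Lemma~\ref{lemB} is the structural observation $(H_i(\so_n)/H_i(\so_n)^\sigma)\otimes\bb{Z}/2\cong\Coker\rho_i$, and Lemma~\ref{lem11} shows that surjectivity of $H_i(\inc)_\sigma$ forces $\Coker\rho_i=0$. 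The decisive move — absent from your proposal — is to combine the resulting vanishing of $(H_i(\so_n)/H_i(\so_n)^\sigma)\otimes\bb{Z}/2$ with Cathelineau's vanishing of $(H_i(\so_n)/H_i(\so_n)^\sigma)\otimes\bb{Z}[1/2]$ (the reformulation \eqref{catssoninc}) to deduce $H_i(\so_n)/H_i(\so_n)^\sigma=0$ outright, hence $H_i(\so_n)^\sigma=H_i(\so_n)=H_i(\so_n)_\sigma$ in the stable range, and then feed this back into the induction. In short, the paper's argument is purely homological-algebraic bookkeeping with the $\sigma$-action; your geometric route would require resolving the $2$-torsion in the $d^1$-differential by hand, which is exactly what the paper is engineered to avoid.
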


In the proof of theorem \ref{mainthm1} we also get the following corollary (see section \ref{proofofthm1}).
\begin{cor}\label{strofson}
For $2i<n$, the group $H_i(\so_n)$ is isomorphic to its own $\sigma$-inariant part.
\end{cor}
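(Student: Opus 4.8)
The plan is to read off the statement from the mechanism that produces Theorem \ref{mainthm1}. The involution $\sigma$ on $H_i(\so_n)$ arises from the conjugation action of $\o_n/\so_n = \bb{Z}/2$ via the extension (\ref{ext}); concretely, picking any reflection $r\in\o_n\setminus\so_n$, the map $\sigma$ is induced by conjugation $c_r\colon\so_n\to\so_n$. The key observation is that the stabilization map $H_i(\inc)\colon H_i(\so_n)\to H_i(\so_{n+1})$ is $\sigma$-equivariant in a strong sense: if we choose the reflection $r$ inside the first coordinate subspace $F^n\subset F^{n+1}$ (so that $\inc(r)$ lies in $\o_{n+1}$ and commutes appropriately with $\inc(\so_n)$), then the involution on $H_i(\so_{n+1})$ restricts along $\inc$ to the involution on $H_i(\so_n)$. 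Thus for $2i<n$, where $H_i(\inc)$ is an isomorphism by Theorem \ref{mainthm1}, the involution $\sigma$ on $H_i(\so_n)$ is conjugate to its own image under stabilization.

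The next step is to show that this stable involution is trivial. First I would use that in $\o_{n+1}$ (for $n+1\geq 2$) any reflection $r\in\o_n\subset\o_{n+1}$ can be conjugated, \emph{inside $\so_{n+1}$}, to $-r$: indeed the product of two distinct reflections is a rotation, and over a Pythagorean field one has enough rotations (the orthogonal group acts transitively on the relevant configurations by Witt's theorem) to realize $r\mapsto -r$ by an element of $\so_{n+1}$. But $c_r = c_{-r}$ as automorphisms of $\so_n$, because $-r$ differs from $r$ by a central element of $\o_n$; more usefully, conjugating the formula for $\sigma$ by this rotation and using functoriality of homology shows $\sigma = \sigma\circ(\text{inner automorphism}) = \sigma$ composed with the identity on homology, yet simultaneously identifies $\sigma$ with the action of an \emph{inner} automorphism of $\so_{n+1}$ pulled back to $\so_n$. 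Since inner automorphisms act trivially on group homology, and the stabilization isomorphism transports $\sigma$ on $H_i(\so_n)$ into a map on $H_i(\so_{n+1})$ that becomes inner there, we conclude $\sigma = \id$ on $H_i(\so_n)$ for $2i<n$. Equivalently, $H_i(\so_n)$ equals its $\sigma$-invariant part $H_i(\so_n)^\sigma$.

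An alternative, cleaner route — and the one I expect the paper to take given the phrasing — is to extract the corollary directly from the proof of Theorem \ref{mainthm1} rather than re-deriving it. Stability proofs of this kind compare $H_*(\so_n)$ with $H_*(\o_n)$ (using Proposition \ref{catprop}) through the Hochschild--Serre spectral sequence of (\ref{ext}), whose $E_2$-page is $H_p(\bb{Z}/2, H_q(\so_n))$. The abutment is $H_*(\o_n)$, which by Proposition \ref{catprop} is already stable in the same range, and one shows the spectral sequence degenerates enough in the range $2i<n$ that $H_i(\o_n)\cong H_0(\bb{Z}/2,H_i(\so_n)) = H_i(\so_n)_\sigma$, the $\sigma$-coinvariants, and moreover the higher group cohomology $H_p(\bb{Z}/2,H_i(\so_n))$ for $p>0$ vanishes in that range. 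Vanishing of $H_p(\bb{Z}/2,M)$ for all $p>0$ forces the $\bb{Z}/2$-module $M=H_i(\so_n)$ to be \emph{cohomologically trivial}; combined with the comparison showing the norm map is an isomorphism, this pins down the action as trivial, i.e. $H_i(\so_n) = H_i(\so_n)^\sigma$.

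The main obstacle is the bookkeeping in the last step: one must ensure that the identification "$H_i(\o_n)\cong H_i(\so_n)_\sigma$ and higher terms vanish" genuinely holds throughout the range $2i<n$ and not merely in a smaller range, since the Lyndon--Hochschild--Serre spectral sequence a priori mixes contributions $H_p(\bb{Z}/2, H_q(\so_n))$ with $p+q=i$ and we only control the stability of $H_q(\so_n)$ for $2q<n$, a strictly weaker constraint than $p+q=i<n/2$ would naively need. Handling this requires the detailed filtration estimates already established in the proof of Theorem \ref{mainthm1}, which is why the corollary is stated as a byproduct of that proof; the argument should therefore be phrased as "inspecting the spectral sequence computation above, one sees additionally that \ldots" rather than as an independent deduction.
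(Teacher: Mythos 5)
Your first route is reaching for a fact the paper already records as equation (\ref{corinvty}), but you over-complicate it. The involution $\sigma$ on $\so_{n+1}$ is conjugation by $\iota(-1)=\begin{pmatrix}-1&0\\0&1_n\end{pmatrix}\in\o_{n+1}$, and this element commutes on the nose with every block-diagonal matrix $\inc(g)=\begin{pmatrix}1&0\\0&g\end{pmatrix}$ for $g\in\so_n$; hence $\sigma$ fixes $\inc(\so_n)$ elementwise and $\Im H_i(\inc)\subseteq H_i(\so_{n+1})^\sigma$, with no appeal to Witt's theorem, to rotations, or to the (irrelevant) conjugacy of $r$ and $-r$ inside $\so_{n+1}$. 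Combined with the surjectivity of $H_i(\inc)\colon H_i(\so_{n-1})\to H_i(\so_n)$ for $2i\le n-1$, i.e.\ $2i<n$, from Theorem \ref{mainthm1}, this immediately gives $H_i(\so_n)=H_i(\so_n)^\sigma$. That is a clean and \emph{simpler} proof than the paper's, once the unnecessary ``inner automorphism'' detour (which as written does not establish the centralizing you need) is removed.

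Your second route contains a genuine error. Cohomological triviality of a $\bb{Z}/2$-module together with the norm map being an isomorphism does \emph{not} force the action to be trivial: the regular module $\bb{Z}[\bb{Z}/2]$ is cohomologically trivial with nontrivial involution. Moreover the paper's spectral-sequence analysis never establishes cohomological triviality; it only shows that $E^2_{0,q}=H_q(\so_n)_\sigma$ injects into the abutment $H_q(\o_n)$ (equation (\ref{lem2})), not that the higher $E^2_{p,q}$ vanish. The paper's actual proof is different from both of your routes: inside the induction proving Theorem \ref{mainthm1}, it shows that $H_i(\so_n)/H_i(\so_n)^\sigma$ vanishes after $\otimes_{\bb{Z}}\bb{Z}/2$ (Lemmas \ref{lemA}, \ref{lemB}, \ref{lem11}, via surjectivity of the coinvariant stability map) and also after $\otimes_{\bb{Z}}\bb{Z}[1/2]$ (Cathelineau's result (\ref{catssoninc})), hence is zero; this identity then feeds back into the induction to upgrade coinvariant stability to integral stability, and the corollary falls out as a byproduct.
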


Another implication is;
\begin{thm}\label{mainthm2}
The map $H_i(\inc,\bb{Z}^t)\colon H_i(\o_n,\bb{Z}^t)\rightarrow H_i(\o_{n+1},\bb{Z}^t)$
is bijective for $2i<n$, and surjective for $2i\leq n$.
\end{thm}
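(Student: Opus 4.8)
First I would reduce Theorem \ref{mainthm2} to the untwisted statements already in hand --- Proposition \ref{catprop} for $\o_n$ and Theorem \ref{mainthm1} for $\so_n$ --- using that $\so_n$ is a normal index-two subgroup of $\o_n$. Let $\tau$ be the nontrivial element of $\o_n/\so_n\cong\bb{Z}/2$ and $N=1+\tau\in\bb{Z}[\o_n/\so_n]$ the norm element. Sending $1\mapsto N$ gives an $\o_n$-module monomorphism $\bb{Z}\hookrightarrow\bb{Z}[\o_n/\so_n]$ whose cokernel is canonically the determinant-twisted module $\bb{Z}^t$ (since $\tau$ acts by $-1$ modulo $N$ and $\o_n\to\o_n/\so_n$ is $\det$), giving a short exact sequence of $\o_n$-modules
\begin{equation*}
0\longrightarrow\bb{Z}\longrightarrow\bb{Z}[\o_n/\so_n]\longrightarrow\bb{Z}^t\longrightarrow 0 .
\end{equation*}
As $\bb{Z}[\o_n/\so_n]=\bb{Z}[\o_n]\otimes_{\bb{Z}[\so_n]}\bb{Z}$, Shapiro's lemma identifies $H_i(\o_n;\bb{Z}[\o_n/\so_n])\cong H_i(\so_n;\bb{Z})$, so the associated long exact sequence in group homology is
\begin{equation*}
\cdots\to H_i(\o_n;\bb{Z})\to H_i(\so_n;\bb{Z})\to H_i(\o_n;\bb{Z}^t)\xrightarrow{\partial}H_{i-1}(\o_n;\bb{Z})\to H_{i-1}(\so_n;\bb{Z})\to\cdots .
\end{equation*}

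Next I would check that this long exact sequence is natural with respect to $\inc$. The extension (\ref{ext}) is compatible with $\inc$ because $\det\circ\inc=\det$; moreover $\inc(\o_n)\cap\so_{n+1}=\inc(\so_n)$ and $\inc(\o_n)\cdot\so_{n+1}=\o_{n+1}$ (for $n\geq 1$; the case $n=0$ is immediate), so by the Mackey formula the restriction of $\bb{Z}[\o_{n+1}/\so_{n+1}]$ along $\inc$ is $\bb{Z}[\o_n/\so_n]$, and the restriction of $\bb{Z}^t$ is $\bb{Z}^t$. Hence the coefficient sequence, the Shapiro isomorphism, and the long exact sequence all commute with the stabilization maps, and under Shapiro the stabilization on the middle term is exactly $H_i(\inc)\colon H_i(\so_n)\to H_i(\so_{n+1})$.

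Finally I would compare the long exact sequences for $n$ and $n+1$ by a diagram chase. If $2i<n$, then in the segment $H_i(\o_n;\bb{Z})\to H_i(\so_n;\bb{Z})\to H_i(\o_n;\bb{Z}^t)\to H_{i-1}(\o_n;\bb{Z})\to H_{i-1}(\so_n;\bb{Z})$ the terms $H_i(\o_n;\bb{Z})$ and $H_{i-1}(\o_n;\bb{Z})$ stabilize by Proposition \ref{catprop} (these degrees are $<n$) while $H_i(\so_n;\bb{Z})$ and $H_{i-1}(\so_n;\bb{Z})$ stabilize by Theorem \ref{mainthm1} (these degrees are $<n/2$), so the sharp five-lemma forces $H_i(\inc,\bb{Z}^t)$ to be bijective. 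If $2i\leq n$, I would instead apply the four-lemma to $H_i(\so_n;\bb{Z})\to H_i(\o_n;\bb{Z}^t)\to H_{i-1}(\o_n;\bb{Z})\to H_{i-1}(\so_n;\bb{Z})$, where $H_i(\inc)$ on $\so_n$ is surjective by Theorem \ref{mainthm1}, $H_{i-1}(\inc)$ on $\o_n$ is bijective by Proposition \ref{catprop}, and $H_{i-1}(\inc)$ on $\so_n$ is injective by Theorem \ref{mainthm1} (since $2(i-1)<n$); this yields surjectivity of $H_i(\inc,\bb{Z}^t)$.

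I expect no serious obstacle: the estimates on the $\o_n$-terms are generous, since Proposition \ref{catprop} already gives stability in degrees $<n$, so the binding constraint comes entirely from the $\so_n$-terms and is precisely the claimed range $2i<n$ (resp.\ $2i\leq n$). The only point deserving care is the naturality bookkeeping for Shapiro's lemma in the second step. (Alternatively one could feed $\bb{Z}^t$ into the Lyndon--Hochschild--Serre spectral sequence of (\ref{ext}) and use Corollary \ref{strofson} to identify the $\sigma$-action in the stable range, but the coefficient sequence above makes the comparison cleaner.)
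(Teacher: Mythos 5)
Your proof is correct and takes essentially the same route as the paper: exhibit a short exact sequence of $\o_n$-modules with middle term $\bb{Z}[\o_n/\so_n]$, identify the middle homology via Shapiro's lemma, and run the five/four lemma on the resulting long exact sequence against the known untwisted stability ranges (Proposition \ref{catprop} and Theorem \ref{mainthm1}). The one substantive divergence is your choice of coefficient sequence. You use $0\to\bb{Z}\to\bb{Z}[\bb{Z}/2]\to\bb{Z}^t\to 0$, which is the paper's (\ref{seq2}) and yields the Bockstein sequence (\ref{bseq2}), placing the relevant segment as
\begin{equation*}
H_i(\o_n)\to H_i(\so_n)\to H_i(\o_n,\bb{Z}^t)\to H_{i-1}(\o_n)\to H_{i-1}(\so_n).
\end{equation*}
The paper instead uses $0\to\bb{Z}^t\to\bb{Z}[\bb{Z}/2]\to\bb{Z}\to 0$, i.e.\ (\ref{seq1})/(\ref{bseq1}), so the $\bb{Z}^t$-term sits between $H_{i+1}(\o_n)$ and $H_i(\so_n)$, and the five-lemma for injectivity then calls on surjectivity of $H_{i+1}(\so_n)\to H_{i+1}(\so_{n+1})$, which Theorem \ref{mainthm1} only supplies when $2(i+1)\leq n$ --- a strictly stronger hypothesis than $2i<n$ when $n$ is odd (the borderline case $2i=n-1$). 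Your choice of (\ref{seq2}) sidesteps this: the outer $\so$-terms sit in degrees $i$ and $i-1$, so the conditions you need ($H_i(\inc)$ on $\so_n$ bijective for $2i<n$, $H_{i-1}(\inc)$ on $\so_n$ injective for $2(i-1)<n$) follow directly from Theorem \ref{mainthm1} throughout the stated range. In short, same idea, but the dual coefficient sequence gives the cleaner bookkeeping and honestly covers the whole claimed range including the odd-$n$ boundary.
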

The group $H_n(\o_n,\bb{Z}^t)$ plays an important role in the problem of spherical scissors congruence (see \cite{dupont2001}).

\subsubsection*{Acknowledgements}
The author would like to thank Masana Harada for his helpful supports.

\section{Involution $\sigma$}\label{involsec}
In this section we study the involution on $H_i(\so_n)$ induced from the extension (\ref{ext}).

Let $R$ be a commutative ring, $G$ be a group and $M$ be a left $RG$-module.
Let $\gamma$ be an element in $G$.
We can define an endomorphism $\gamma_{\ast}$ on $H_i(G,M)$ by
\begin{equation*}
\gamma_{\ast}([g_1|\ldots|g_i]\otimes m)=[\gamma g_1\gamma^{-1}|\ldots|\gamma g_i\gamma^{-1}]\otimes \gamma m
\end{equation*}
using the standard bar resolution.
Here $g_1,\ldots,g_i\in G$ and $m\in M$.
Notice that
\begin{equation}\label{invact}
\text{the endomorphism $\gamma_{\ast}$ is chain homotopic to the identity.}
\end{equation}

For the proof, see \cite[Lemma 5.4]{dupont2001} for instance.

The sequence (\ref{ext}) splits by
\begin{equation*}
\iota \colon \bb{Z}/2 \rightarrow \o_n,\qquad \pm 1 \mapsto \begin{pmatrix}\pm 1 & 0 \\ 0 & 1_{n-1}\end{pmatrix},
\end{equation*}
where $1_{n-1}$ means the $(n-1)$-unit matrix.

The sequence (\ref{ext}) induces the action of $\bb{Z}/2=\{ \pm 1\}$ on $\so_n$.
The $(-1)$-action
\begin{equation*}
-1\acts g:= \begin{pmatrix}-1 & 0 \\ 0 & 1_{n-1}\end{pmatrix} g \begin{pmatrix}-1 & 0 \\ 0 & 1_{n-1}\end{pmatrix}^{-1}
\end{equation*}
defines an involution $\sigma$ on $H_i(\so_n)$.

Observe that the following diagram
\begin{equation}\label{commlemma}
\xymatrix{
H_i(\so_n) \ar[d]_{\sigma} \ar[r]^{H_i(\inc)} & H_i(\so_{n+1}) \ar[d]^{\sigma} \\
H_i(\so_n) \ar[r]_{H_i(\inc)} & H_i(\so_{n+1})
}
\end{equation}
is commutative, for{\small
\begin{eqnarray*}
&&\inc(\sigma\acts g)\\
&=&\begin{pmatrix}1 & 0 \\ 0 & \begin{pmatrix}-1 & 0 \\ 0 & 1_{n-1}\end{pmatrix}g\begin{pmatrix}-1 & 0 \\ 0 & 1_{n-1}\end{pmatrix}^{-1}\end{pmatrix}\\
&=&\begin{pmatrix}(-1)^{-1} & 0 & 0 \\ 0 & -1 & 0 \\ 0 & 0 & 1_{n-1}\end{pmatrix}\begin{pmatrix}-1 & 0 & 0 \\ 0 & 1 & 0 \\ 0 & 0 & 1_{n-1} \end{pmatrix}\begin{pmatrix}1 & 0 \\ 0 & g \end{pmatrix}
\begin{pmatrix}(-1)^{-1} & 0 & 0 \\ 0 & 1 & 0 \\ 0 & 0 & 1_{n-1} \end{pmatrix}\begin{pmatrix}-1 & 0 & 0 \\ 0 & (-1)^{-1} & 0 \\ 0 & 0 & 1_{n-1}\end{pmatrix}\\
&=&\begin{pmatrix}(-1)^{-1} & 0 & 0 \\ 0 & -1 & 0 \\ 0 & 0 & 1_{n-1}\end{pmatrix}\sigma\acts(\inc(g))\begin{pmatrix}(-1)^{-1} & 0 & 0 \\ 0 & -1 & 0 \\ 0 & 0 & 1_{n-1}\end{pmatrix}^{-1},
\end{eqnarray*}
}
where $\begin{pmatrix}(-1)^{-1} & 0 & 0 \\ 0 & -1 & 0 \\ 0 & 0 & 1_{n-1}\end{pmatrix}\in\so_{n+1}$, so that
this action on homology $H_i(\so_{n+1})$ is trivial.
Moreover we have
\begin{equation}\label{corinvty}
\Im H_i(\inc)\subseteq H_i(\so_{n+1})^{\sigma},
\end{equation}
for
\begin{equation*}
-1\acts(\inc(g))=-1\acts\begin{pmatrix}1 & 0 \\ 0 & g\end{pmatrix}=\begin{pmatrix}-1 & 0 \\ 0 & 1_n\end{pmatrix}\begin{pmatrix}1 & 0 \\ 0 & g\end{pmatrix}\begin{pmatrix}-1 & 0 \\ 0 & 1_n\end{pmatrix}^{-1}=\begin{pmatrix}1 & 0 \\ 0 & g\end{pmatrix}=\inc(g),
\end{equation*}
where $H_i(\so_{n+1})^{\sigma}$ is $\sigma$-invariant part of $H_i(\so_n)$.

From the above argument, there exists a map
\begin{equation}
H_i(\inc)_{\sigma}^{\prime}\colon H_i(\so_n)_{\sigma}\rightarrow H_i(\so_{n+1}),
\end{equation}
where we denote by $H_i(\so_n)_{\sigma}$ $\sigma$-coinvariant part of $H_i(\so_n)$,
so that the diagram
\begin{equation}
\xymatrix{
H_i(\so_n) \ar[d]_{\pi} \ar[r]^{H_i(\inc)}& H_i(\so_{n+1}) \ar[d]^{\pi} \\
H_i(\so_n)_{\sigma} \ar[ur]^{H_i(\inc)_{\sigma}^{\prime}} \ar[r]_{H_i(\inc)_{\sigma}} & H_i(\so_{n+1})_{\sigma}
}
\end{equation}
is commutative, where the map $\pi$ is the natural projection,
and the map $H_i(\inc)_{\sigma}$ factors as $\pi\circ H_i(\inc)_{\sigma}^{\prime}$.

\section{Induction algorithm}\label{specseq}
In this section we shall prove the following inductive statement on $q$ for each fixed $n$ and $i$:
\begin{lem}\label{lemA}
\begin{equation}\label{soncoinvstab3}
H_q(\so_n)_{\sigma}\rightarrow H_q(\so_{n+1})_{\sigma}\text{ is bijective for $q\leq i$}
\end{equation}
if the following two conditions are satisfied;
\begin{equation}\label{onstab3}
H_q(\o_n)\rightarrow H_q(\o_{n+1})\quad \text{is bijective for $q\leq i$}
\end{equation}
and 
\begin{equation}\label{sonstab3}
H_q(\so_n)\rightarrow H_q(\so_{n+1})\quad \text{is bijective for $q<i$}.
\end{equation}
\end{lem}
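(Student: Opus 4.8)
The goal is to transfer stability from the full orthogonal groups and from the untwisted special orthogonal groups to the $\sigma$-coinvariants of $H_*(\so_n)$. The natural tool is the Lyndon–Hochschild–Serre spectral sequence of the split extension (\ref{ext}), which has the form $E^2_{p,q} = H_p(\bb{Z}/2, H_q(\so_n))$ and converges to $H_{p+q}(\o_n)$. Because $\bb{Z}/2$ acts on $H_q(\so_n)$ precisely through the involution $\sigma$ (that is how $\sigma$ was defined in section \ref{involsec}), the bottom row $E^2_{0,q} = H_q(\so_n)_{\sigma}$ is exactly the object we want to control, and the edge homomorphism along the bottom row of the spectral sequence is the map $H_q(\so_n)_{\sigma} \to H_q(\o_n)$ induced by the inclusion $i\colon\so_n\hookrightarrow\o_n$.

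**Main steps.** First I would set up the map of spectral sequences induced by $\inc$, comparing the LHS spectral sequence for $\so_n\lhd\o_n$ with that for $\so_{n+1}\lhd\o_{n+1}$; the relevant squares commute by the compatibility of $\inc$ with the extensions, essentially the same matrix bookkeeping as in (\ref{commlemma}). Second, I would use hypothesis (\ref{sonstab3}), bijectivity of $H_q(\so_n)\to H_q(\so_{n+1})$ for $q<i$, to conclude that $\inc$ induces an isomorphism on $E^2_{p,q}$ for all $p$ whenever $q<i$ — group homology $H_p(\bb{Z}/2,-)$ is functorial in the coefficient module, and an isomorphism of $\bb{Z}/2$-modules induces an isomorphism on all $H_p(\bb{Z}/2,-)$. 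Third, I would invoke the five-lemma / comparison-theorem machinery for spectral sequences: since the $E^2$-terms agree in total degree $< i$ (strictly below the critical line) and the abutments $H_q(\o_n)\to H_q(\o_{n+1})$ agree for $q\le i$ by hypothesis (\ref{onstab3}), a diagram chase through the finitely many pages forces the map on $E^\infty_{0,i}$ — and then on $E^2_{0,i}=H_i(\so_n)_\sigma$ after accounting for the differentials leaving and entering position $(0,i)$ — to be an isomorphism as well. Concretely: all differentials $d_r$ into $E^r_{0,i}$ vanish for position reasons ($p\geq0$), and all differentials $d_r$ out of $E^r_{0,i}$ land in $E^r_{-r,\,i+r-1}$, hence also vanish, so $E^\infty_{0,i}=E^2_{0,i}=H_i(\so_n)_\sigma$ already; the content is therefore that the inclusion of this bottom group as a subquotient of $H_i(\o_n)$ is detected compatibly on both sides.

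**The obstacle.** The delicate point is handling degree $q=i$ itself rather than $q<i$: hypothesis (\ref{sonstab3}) only gives isomorphisms below degree $i$, so on the line $q=i$ the $E^2$-terms $E^2_{p,i}$ need not match under $\inc$, yet we want to conclude bijectivity of $H_i(\so_n)_\sigma\to H_i(\so_{i+1})_\sigma$, i.e.\ on $E^2_{0,i}$. The way around this is that $E^2_{0,i}$ survives to $E^\infty$ (no incoming or outgoing differentials, as noted above), so it sits inside the filtration of the abutment $H_i(\o_n)$ as the lowest filtration quotient; comparing the filtered abutments via (\ref{onstab3}) and the already-established isomorphisms on all higher rows $E^2_{p,j}$ with $p+j=i$, $j<i$ (equivalently $p\ge 1$), one peels off the filtration from the top and is left with an isomorphism on the bottom piece $E^\infty_{0,i}=H_i(\so_n)_\sigma$. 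The splitting $\iota$ of (\ref{ext}) is what guarantees the edge map is split injective and makes this peeling clean. I would also need to separately record surjectivity in the boundary degree $q=i+1$ if the statement (\ref{soncoinvstab3}) is later strengthened, but as stated only bijectivity for $q\le i$ is required, and that follows from the above.
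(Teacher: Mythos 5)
Your overall plan---compare the Lyndon--Hochschild--Serre spectral sequences of the split extension \eqref{ext} for $\o_n$ and $\o_{n+1}$, use \eqref{sonstab3} to get isomorphisms on the $E^2$-page for $q<i$, use \eqref{onstab3} on the abutment, peel off the filtration, and isolate the bottom filtration quotient as $H_i(\so_n)_\sigma$---is exactly the approach the paper takes. However, there is a genuine gap at the crucial identification $\mca{F}_0 \cong H_i(\so_n)_\sigma$, i.e.\ at the claim $E^2_{0,i}=E^\infty_{0,i}$.

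You write that ``all differentials $d_r$ into $E^r_{0,i}$ vanish for position reasons ($p\ge 0$)''. This is false in a first-quadrant \emph{homological} spectral sequence: the differential landing in $(0,i)$ is $d^r\colon E^r_{r,\,i-r+1}\to E^r_{0,i}$, whose source has $p=r\ge 2$ and $q=i-r+1\ge 0$, so it lies in the first quadrant and need not vanish. (You appear to have used the cohomological indexing $d_r\colon E_r^{p,q}\to E_r^{p+r,q-r+1}$, for which the incoming differentials at $(0,i)$ would indeed come from negative $p$.) The outgoing differentials at $(0,i)$ do vanish for position reasons, so $E^\infty_{0,i}$ is a \emph{quotient} of $E^2_{0,i}=H_i(\so_n)_\sigma$; but your argument does not show this quotient is an isomorphism. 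The secondary appeal to the splitting $\iota$ of \eqref{ext} (``guarantees the edge map is split injective'') is also not justified: a section $\iota\colon\bb{Z}/2\to\o_n$ controls the edge along the row $q=0$, not the edge along the column $p=0$, and in general gives no retraction $H_i(\o_n)\to H_i(\so_n)_\sigma$. Without $E^2_{0,i}=E^\infty_{0,i}$, the five-lemma argument at the end only yields bijectivity of $E^\infty_{0,i}\to\tilde E^\infty_{0,i}$, not of $H_i(\so_n)_\sigma\to H_i(\so_{n+1})_\sigma$. The paper fills this gap separately (equation \eqref{lem2}), by a dedicated argument on the double bar complex showing that the images of the relevant differentials land in the augmentation-ideal part already killed in $E^2_{0,q}$; that argument, whatever its merits, is of a different nature than a position count, and your proof needs some replacement for it.
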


First we compare the Lyndon-Hochschild-Serre spectral sequences (see for instance \cite{McCleary2001}) on $\o_n$.
Let $E^r$ be the $r$-th term of the Lyndon-Hochschild-Serre homology spectral sequence
associated to \eqref{ext}, and $\tilde{E}^r$ be that on $\o_{n+1}$.

Since we use the bar resolutions, $E^1_{1,q}$ can be seen to be generated by the classes of the form $[a]\otimes c$,
where $a$ is an element of $\bb{Z}/2$ and $c\in H_q(\so_n)$.
We denote by $\delta$ the horizontal map of the double bar complex which induces $E^2$.
Then from the definition of $\delta$, it holds that
\begin{equation*}
\delta([a]\otimes c)=a\acts c-c.
\end{equation*}
Hence its class is zero in $\bb{Z}/2$-coinvariant part $H_0(\bb{Z}/2,H_q(\so_n))$.

Thus we get that for any $q$, $E_{0,q}^2=H_0(\bb{Z}/2,H_q(\so_n))$ injects into $E^{\infty}_{0,q}$,
so that
\begin{equation}\label{lem2}
E_{0,q}^2=E_{0,q}^3=\cdots=E_{0,q}^{\infty}.
\end{equation}
The same is true for $\tilde{E}^r_{0,q}$.
Hence we have from (\ref{lem2}) that there exists a natural injection $H_i(\so_n)_{\sigma}\rightarrow H_i(\o_n)$.

We denote by $\{\mca{F}_p\}$ the filtration on the $i$-th homology $H_i(\o_n)$
induced by the Lyndon-Hochschild-Serre spectral sequence obtained from (\ref{ext}),
and by $\{\tilde{\mca{F}}_p\}$ that on $H_i(\o_{n+1})$.
We write the filtration on $H_i(\o_n)$ by
\begin{equation*}
0=\mca{F}_{-1}\subseteq \mca{F}_0\subseteq \mca{F}_1\subseteq \cdots\subseteq \mca{F}_p\subseteq\cdots\subseteq H_i(\o_n).
\end{equation*}
Here we treat the filtration only on the $i$-th homology, so that this filtration is of length $i$ on $H_i(\o_n)$.

Recall that if $p+q=i$ then
\begin{equation*}
E_{p,q}^{\infty}=\mca{F}_p/\mca{F}_{p-1}.
\end{equation*}

First we compare the `left' parts.
\begin{slem}\label{sublemofA}
Under the condition that \eqref{onstab3} and \eqref{sonstab3} are satisfied, the map
\begin{equation*}
H_i(\o_n)/\mca{F}_0\rightarrow H_i(\o_{n+1})/\tilde{\mca{F}}_0
\end{equation*}
is bijective.
\end{slem}

To prove \ref{sublemofA}, first we check the following \ref{lem3}.
\begin{clm}\label{lem3}
If both the condition \eqref{onstab3} and \eqref{sonstab3} are satisfied,
the following map of spectral sequences
\begin{equation*}
E^{\infty}_{p,q}(\inc)\colon E_{p,q}^{\infty}\rightarrow\tilde{E}_{p,q}^{\infty}
\end{equation*}
is bijective for $p+q\leq i$ and $q<i$.
\end{clm}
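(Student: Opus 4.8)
The plan is to compare the Lyndon--Hochschild--Serre spectral sequences $E^r$ and $\tilde E^r$ level by level, starting from the $E^2$-pages and pushing the comparison up to $E^\infty$. On the $E^2$-page we have $E^2_{p,q}=H_p(\bb{Z}/2,H_q(\so_n))$ and $\tilde E^2_{p,q}=H_p(\bb{Z}/2,H_q(\so_{n+1}))$. The inclusion $\inc$ induces a $\bb{Z}/2$-equivariant map $H_q(\so_n)\to H_q(\so_{n+1})$ (this is exactly the content of the commutative diagram \eqref{commlemma}), hence a map on group homology $H_p(\bb{Z}/2,-)$. First I would observe that for $q<i$, hypothesis \eqref{sonstab3} says $H_q(\so_n)\to H_q(\so_{n+1})$ is an isomorphism of $\bb{Z}/2$-modules, so $E^2_{p,q}(\inc)$ is an isomorphism for all $p$ whenever $q<i$. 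This already handles every bidegree appearing in the claim except those with $q=i$, which force $p=0$ by the constraint $p+q\le i$; but those are excluded by the hypothesis $q<i$ in the claim. So on the $E^2$-page the comparison map is an isomorphism in the entire range $\{p+q\le i,\ q<i\}$.

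Next I would run the induction on $r$. Suppose $E^r_{p,q}(\inc)$ is an isomorphism for all $(p,q)$ with $p+q\le i$ and $q<i$; I want the same for $E^{r+1}$. Fix such a bidegree $(p,q)$. The group $E^{r+1}_{p,q}$ is the homology of $E^r_{p+r,q-r+1}\to E^r_{p,q}\to E^r_{p-r,q+r-1}$. The source $E^r_{p+r,q-r+1}$ sits in total degree $p+q+1$; this is $\le i$ only if... it need not be, so here I must be slightly careful. The right fix: the outgoing differential lands in $E^r_{p-r,q+r-1}$, whose second index $q+r-1$ can exceed $i-1$. When $q+r-1\ge i$, I cannot directly invoke the inductive hypothesis on that target. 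The main obstacle will therefore be controlling these "out-of-range" targets of differentials. The way around it is to use the structural fact \eqref{lem2}: the bottom row stabilizes, $E^2_{0,q}=E^\infty_{0,q}$, so there are no nonzero differentials into or out of the $p=0$ column from $E^2$ onward; dually one shows (or cites from the known stability range, Proposition \ref{catprop}, applied to $H_\ast(\o_n)$) that in the range $p+q\le i$ the spectral sequences are already close to degenerate, so the only differentials one has to match are those with both source and target in the good range $\{q<i\}$, where the inductive hypothesis applies and gives an isomorphism on homology of the three-term complexes.

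Concretely, the key steps in order are: (1) identify $E^2$ and check $\bb{Z}/2$-equivariance of $\inc_\ast$ via \eqref{commlemma}; (2) use \eqref{sonstab3} to get $E^2_{p,q}(\inc)$ bijective for $q<i$, hence on the whole set $\{p+q\le i,\ q<i\}$; (3) induct on $r$, at each stage noting that a differential $d^r$ emanating from or landing in a bidegree with $q<i$ and $p+q\le i$ has its other end either again in the good range, or in a bidegree killed by the edge-homomorphism phenomenon \eqref{lem2}, or forced to vanish by the vanishing line coming from Proposition \ref{catprop}; (4) conclude $E^\infty_{p,q}(\inc)$ is bijective on $\{p+q\le i,\ q<i\}$. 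The hard part, as flagged, is step (3): making precise that every differential we need to compare has both ends in a region where we already know the comparison map is an isomorphism, so that passing to homology preserves bijectivity; this is where the asymmetry between \eqref{onstab3} (bijective for $q\le i$) and \eqref{sonstab3} (bijective only for $q<i$) must be exploited carefully, together with the stabilization \eqref{lem2} of the bottom row.
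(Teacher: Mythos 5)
Your overall strategy is the same as the paper's: identify the $E^2$-page comparison as an isomorphism for $q<i$ via \eqref{sonstab3}, invoke the stabilization \eqref{lem2} of the $p=0$ column to kill the problematic differential, and push through the pages by an induction on $r$. However, there is a genuine gap in your step (3), and you have, in part, "fixed" the wrong problem. With your stated inductive hypothesis — bijectivity on the region $A=\{p+q\le i,\ q<i\}$ — the \emph{outgoing} target is never an issue: if $(p,q)\in A$ and $p\ge r$ (so the target $E^r_{p-r,q+r-1}$ is nonzero), then $q\le i-p\le i-r$, hence $q+r-1\le i-1<i$ and $(p-r)+(q+r-1)=p+q-1\le i$, so the target stays in $A$. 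The genuine obstacle, which you notice but then set aside as the "less right" fix, is the \emph{incoming} source $E^r_{p+r,q-r+1}$, whose total degree $p+q+1$ can equal $i+1$, hence lies outside $A$ even though its second index $q-r+1<i-1$. Your inductive hypothesis as stated simply does not apply to it.

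The repair (which is what the paper actually does) is to carry a \emph{larger, shrinking} region in the induction: at the $E^r$-page one controls $\{p+q\le i,\ q<i\}\cup\{q<i-r+2\}$, so that incoming sources — which have strictly smaller $q'$ and arbitrarily large $p'$ — are covered, while \eqref{lem2} handles the single escape route: an outgoing $d^r$ landing on the column $p=0$ with $q''\ge i$, which \eqref{lem2} forces to vanish. Your additional appeal to Proposition~\ref{catprop} ("the spectral sequences are already close to degenerate in the range $p+q\le i$") is a red herring: \ref{catprop} concerns the abutment $H_\ast(\o_n)$ and gives no degeneracy of either Lyndon--Hochschild--Serre spectral sequence; the paper's argument does not use it here. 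You do correctly flag that step (3) is incomplete, but as written the proposal both mislocates the difficulty and proposes an ineffective tool for it; without enlarging the inductive region as above, the argument does not close.
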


\begin{proof}
First we calculate $E^2$-terms.
The condition says that for $q<i$, the map
\begin{equation*}
E^2_{p,q}(\inc)\colon E^2_{p,q}\rightarrow \tilde{E}^2_{p,q}
\end{equation*}
is bijective.
Moreover, let $d^2_{p,q}\colon E^2_{p,q}\rightarrow E^2_{p-2,q+1}$ and
$\tilde{d}^2_{p,q}\colon \tilde{E}^2_{p,q}\rightarrow \tilde{E}^2_{p-2,q+1}$ be
the corresponding differentials, then we get
\begin{equation*}
\tilde{d}^2_{p,q}\circ E^2_{p,q}(\inc)=E^2_{p-2,q+1}(\inc)\circ d^2_{p,q}
\end{equation*}
for $q<i-1$, and from (\ref{lem2}) we obtain
\begin{equation*}
\tilde{d}^2_{2,i-1}\circ E^2_{2,i-1}(\inc)=0=E^2_{0,i}(\inc)\circ d^2_{2,i-1}.
\end{equation*}
Hence, at $E^3$-terms
\begin{equation*}
E^3_{p,q}(\inc)\colon E^3_{p,q}\rightarrow \tilde{E}^3_{p,q}
\end{equation*}
is bijective for $p+q\leq i$ and $q<i$, or for $q<i-1$ and any $p$.
Moreover, we also obtain that
\begin{equation*}
\tilde{d}^3_{p,q}\circ E^3_{p,q}(\inc)=E^3_{p-3,q+2}(\inc)\circ d^3_{p,q}
\end{equation*}
for $q<i-2$ and that
\begin{equation*}
\tilde{d}^3_{3,i-2}\circ E^3_{3,i-2}(\inc)=0=E^3_{0,i}(\inc)\circ d^3_{3,i-2}.
\end{equation*}

Repeating this process until $r=i+1$, both $E_{p,q}^r$ and $\tilde{E}^r_{p,q}$ are
degenerate at $p+q\leq i$, and therefore for $p+q\leq i$ and $q<i$ the map
\begin{equation*}
E^{\infty}_{p,q}(\inc)\colon E^{\infty}_{p,q}\rightarrow \tilde{E}^{\infty}_{p,q}
\end{equation*}
is bijective.
\end{proof}

From \ref{lem3}, using five lemmas repeatingly, we have the following:
\begin{slem}\label{lem4}
If the condition \eqref{onstab3} and \eqref{sonstab3} are satisfied, the natural map
\begin{equation*}
F_p\colon \mca{F}_p/\mca{F}_0\rightarrow\tilde{\mca{F}}_p/\tilde{\mca{F}}_0
\end{equation*}
is bijective.
\end{slem}

Now we begin the proof of lemma \ref{lemA}.

Suppose that 
\begin{equation*}
H_q(\inc) \colon H_q(\o_n)\rightarrow H_q(\o_{n+1})
\end{equation*}
is bijective for $q\leq i$ and
\begin{equation*}
H_q(\inc) \colon H_q(\so_n) \rightarrow H_q(\so_{n+1})
\end{equation*}
is bijective for $q<i$.
Then by \ref{lem4}, the map
\begin{equation*}
H_q(\inc)/\mca{F}_0\colon H_q(\o_n)/\mca{F}_0\rightarrow H_q(\o_{n+1})/\tilde{\mca{F}}_0
\end{equation*}
becomes bijective.
Notice that $H_i(\so_n)_{\sigma}\cong\mca{F}_0$, and $H_i(\so_{n+1})_{\sigma}\cong\tilde{\mca{F}}_0$ respectively.

Hence the middle and right vertical maps in the diagram
\begin{equation*}
\xymatrix{
0\ar[r] & H_i(\so_n)_{\sigma}\ar[r]\ar[d]_{H_i(\inc)_{\sigma}} & H_i(\o_n)\ar[r]\ar[d]_{H_i(\inc)} & H_i(\o_n)/\mca{F}_0\ar[r]\ar[d]_{H_i(\inc)/\mca{F}_0} & 0\\
0\ar[r] & H_i(\so_{n+1})_{\sigma}\ar[r] & H_i(\o_{n+1})\ar[r] & H_i(\o_{n+1})/\tilde{\mca{F}}_0\ar[r] & 0
}
\end{equation*}
(where the injectivities of left arrows are from (\ref{lem2})) are bijective and the horizontal columns in the above are exact.
That is, using five lemma, the left vertical map is also bijective.
This concludes the claim of lemma \ref{lemA}.
\qed

We define the map $\rho_i$ as the composition of natural inclusion and projection:
\begin{equation}\label{rho_i}
\rho_i\colon H_i(\so_n)^{\sigma}\rightarrow H_i(\so_n)\rightarrow H_i(\so_n)_{\sigma}.
\end{equation}
Let us see the following commutative diagram
\begin{equation}\label{diag***}
\xymatrix{
H_i(\so_n)^{\sigma}\ar[r]^{\rho_i}\ar[d]_{H_i(\inc)^{\sigma}} & H_i(\so_n)_{\sigma}\ar[r]\ar[d]^{H_i(\inc)_{\sigma}}\ar[dl]_{H_i(\inc)_{\sigma}^{\prime}} & H_i(\so_n)_{\sigma}/\Im\rho_{n,i}\ar[r]\ar[d] & 0\\
H_i(\so_{n+1})^{\sigma}\ar[r]_{\rho_i} & H_i(\so_{n+1})_{\sigma}\ar[r] & H_i(\so_{n+1})_{\sigma}/\Im\rho_i\ar[r] & 0
}
\end{equation}
where the left vertical map $H_i(\inc)^{\sigma}$ means the restriction.
Here if $H_i(\inc)_{\sigma}$ is surjective then, using \eqref{diag***},
$\rho_i\colon H_i(\so_{n+1})^{\sigma}\rightarrow H_i(\so_{n+1})_{\sigma}$ in \eqref{diag***} is surjective,
and hence
\begin{equation*}
\Coker\rho_i=H_i(\so_{n+1})_{\sigma}/\Im\rho_i
\end{equation*}
is zero.

We have proved the following proposition.

\begin{lem}\label{lem11}
If $H_i(\inc)_{\sigma}\colon H_i(\so_n)_{\sigma}\rightarrow H_i(\so_{n+1})_{\sigma}$ is surjective, then
\begin{equation*}
\Coker\{\rho_i\colon H_i(\so_{n+1})^{\sigma}\rightarrow H_i(\so_{n+1})_{\sigma}\}=0.
\end{equation*}
\end{lem}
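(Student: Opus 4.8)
The plan is to deduce the statement directly from the factorisation of $H_i(\inc)_\sigma$ set up in Section~\ref{involsec} together with the commutative diagram \eqref{diag***}; no new computation should be needed. First I would recall that by \eqref{corinvty} the image of $H_i(\inc)$ is contained in $H_i(\so_{n+1})^\sigma$, so that the lift $H_i(\inc)_\sigma^\prime$ constructed in Section~\ref{involsec} can be regarded as a map $H_i(\so_n)_\sigma \to H_i(\so_{n+1})^\sigma$, and that with this understanding $H_i(\inc)_\sigma = \rho_i \circ H_i(\inc)_\sigma^\prime$. This is exactly the commutativity of the lower triangle in \eqref{diag***}, and it is the only ingredient that has to be verified here.

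Granting this, the argument is a one-line factorisation: assume $H_i(\inc)_\sigma$ is surjective; since $H_i(\inc)_\sigma = \rho_i \circ H_i(\inc)_\sigma^\prime$, the second factor $\rho_i \colon H_i(\so_{n+1})^\sigma \to H_i(\so_{n+1})_\sigma$ is then surjective as well, and hence $\Coker\rho_i = H_i(\so_{n+1})_\sigma / \Im\rho_i = 0$. One could instead run a short diagram chase along the bottom row of \eqref{diag***}, using exactness of the rows, but the factorisation is cleaner and I would present it that way.

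The main point to be careful about — the only ``obstacle,'' and a mild one — is the claim that $H_i(\inc)_\sigma^\prime$ really takes values in the $\sigma$-invariant subgroup $H_i(\so_{n+1})^\sigma$ rather than merely in $H_i(\so_{n+1})$; without this the composite $\rho_i \circ H_i(\inc)_\sigma^\prime$ would not even be defined. This is precisely where \eqref{corinvty} enters. All the substantive work — constructing $H_i(\inc)_\sigma^\prime$ and establishing \eqref{corinvty} — has already been carried out in Section~\ref{involsec}, so once that factorisation is invoked the lemma follows immediately.
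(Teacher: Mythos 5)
Your proof is correct and follows exactly the route the paper takes: the paper's own argument is to read off the factorisation $H_i(\inc)_{\sigma}=\rho_i\circ H_i(\inc)_{\sigma}^{\prime}$ from the lower-left triangle of \eqref{diag***} (with $H_i(\inc)_{\sigma}^{\prime}$ landing in $H_i(\so_{n+1})^{\sigma}$ thanks to \eqref{corinvty}) and then conclude surjectivity of $\rho_i$ from surjectivity of $H_i(\inc)_{\sigma}$. You have even correctly isolated the one point that needs checking, namely that the lift takes values in the $\sigma$-invariant part.
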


\section{On structures of $\sigma$-coinvariant part}\label{strsec}
In this section we prove the following lemma:
\begin{lem}\label{lemB}
For all $n$ and all $i$, the following isomorphism exists:
\begin{equation}
(H_i(\so_n)/H_i(\so_n)^{\sigma})\otimes_{\bb{Z}}\bb{Z}/2\cong\Coker\rho_i.
\end{equation}
\end{lem}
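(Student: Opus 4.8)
The plan is to forget the geometry entirely and prove the statement as an identity of abelian groups attached to the $\bb{Z}[\bb{Z}/2]$-module $A:=H_i(\so_n)$, where $\bb{Z}/2=\langle\sigma\mid\sigma^2=1\rangle$ acts through the involution $\sigma$. Write $(\sigma\pm 1)a$ for $\sigma(a)\pm a$. Then $A^{\sigma}=\Ker(\sigma-1)$, the coinvariant group is $A_{\sigma}=A/(\sigma-1)A$, and $\rho_i$ of \eqref{rho_i} is the composite $A^{\sigma}\hookrightarrow A\twoheadrightarrow A/(\sigma-1)A$. Since all three constructions $(-)^{\sigma}$, $(-)_{\sigma}$ and $-\otimes_{\bb{Z}}\bb{Z}/2$ are functorial in $A$, nothing in the argument will use $\so_n$ or the field $F$; this is why the lemma can hold for all $n$ and all $i$.

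The first step is to rewrite both sides as quotients of $A$ by an explicit subgroup. From the description of $\rho_i$ we get $\Im\rho_i=\bigl(A^{\sigma}+(\sigma-1)A\bigr)/(\sigma-1)A$, so the third isomorphism theorem gives $\Coker\rho_i=A_{\sigma}/\Im\rho_i\cong A/\bigl(A^{\sigma}+(\sigma-1)A\bigr)$. On the other hand, applying right exactness of $-\otimes_{\bb{Z}}\bb{Z}/2$ to $A/A^{\sigma}$ yields $(A/A^{\sigma})\otimes_{\bb{Z}}\bb{Z}/2\cong (A/A^{\sigma})\big/2(A/A^{\sigma})\cong A/\bigl(A^{\sigma}+2A\bigr)$, where I use $2(A/A^{\sigma})=(2A+A^{\sigma})/A^{\sigma}$. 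Thus the lemma is reduced to the single equality of subgroups
\[
A^{\sigma}+(\sigma-1)A \;=\; A^{\sigma}+2A .
\]

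The remaining step is this equality, which I expect to be the only real content, and it follows from two one-line identities valid for every $a\in A$: $2a=(\sigma+1)a-(\sigma-1)a$ and $(\sigma-1)a=(\sigma+1)a-2a$. The point is that $(\sigma+1)a$ always lies in $A^{\sigma}$, since $\sigma\bigl((\sigma+1)a\bigr)=(\sigma^2+\sigma)a=(\sigma+1)a$. The first identity then shows $2A\subseteq A^{\sigma}+(\sigma-1)A$, and the second shows $(\sigma-1)A\subseteq A^{\sigma}+2A$; adding $A^{\sigma}$ to each side gives the two inclusions, hence the equality, and the lemma follows. There is no genuine obstacle here: the whole proof is elementary bookkeeping with the group ring $\bb{Z}[\bb{Z}/2]$, the only thing to be careful about being that $\Im\rho_i$ is computed with respect to the correct maps, namely the inclusion of invariants followed by the projection onto coinvariants exactly as in \eqref{rho_i}.
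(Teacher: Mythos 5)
Your proof is correct. It reaches the same conclusion as the paper but by a somewhat more streamlined route. The paper factors the argument into two sub-lemmas: first it shows $\Coker\rho_i\cong (A/A^{\sigma})_{\sigma}$ (with $A=H_i(\so_n)$) by applying right exactness of the coinvariant functor $\bullet\otimes_{\bb{Z}[\bb{Z}/2]}\bb{Z}$ to the short exact sequence $0\to A^{\sigma}\to A\to A/A^{\sigma}\to 0$ and noting $A^{\sigma}_{\sigma}=A^{\sigma}$; second it shows $(A/A^{\sigma})_{\sigma}\cong (A/A^{\sigma})/2(A/A^{\sigma})$ using the four-term exact sequence $0\to M^{\sigma}\to M\xrightarrow{1-\sigma}M\to M_{\sigma}\to 0$ together with the observation that $1-\sigma$ acts on $A/A^{\sigma}$ as multiplication by $2$. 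You instead bypass the intermediate object $(A/A^{\sigma})_{\sigma}$ entirely: you rewrite both sides as quotients of $A$ by explicit subgroups, $\Coker\rho_i\cong A/(A^{\sigma}+(\sigma-1)A)$ and $(A/A^{\sigma})\otimes_{\bb{Z}}\bb{Z}/2\cong A/(A^{\sigma}+2A)$, and reduce the lemma to the single subgroup equality $A^{\sigma}+(\sigma-1)A=A^{\sigma}+2A$, proved from the one-line identities $2a=(\sigma+1)a-(\sigma-1)a$ and $(\sigma-1)a=(\sigma+1)a-2a$. The underlying key fact is identical in both treatments, namely $(\sigma+1)a\in A^{\sigma}$; indeed saying $1-\sigma$ is multiplication by $2$ on $A/A^{\sigma}$ is exactly the same assertion as your subgroup equality. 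What your version buys is elementarity: no exact sequences and no coinvariant functoriality are invoked, just arithmetic in the group ring $\bb{Z}[\bb{Z}/2]$, and the ``for all $n$ and $i$'' clause becomes transparent since nothing about $\so_n$ or $F$ enters. Both proofs are sound.
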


Let us first notice that the involution $\sigma\colon H_i(\so_n)\rightarrow H_i(\so_n)$
induces an involution endomorphism on $H_i(\so_n)/H_i(\so_n)^{\sigma}$.

\begin{slem}\label{lem8}
There exists an isomorphism
\begin{equation*}
\Coker\rho_i\cong(H_i(\so_n)/H_i(\so_n)^{\sigma})_{\sigma},
\end{equation*}
where $\rho_i$ is the map defined in \eqref{rho_i}.
\end{slem}

For the left exactness of the functor $\bullet\otimes_{\bb{Z}[\bb{Z}/2]}\bb{Z}$ of taking $\sigma$-coinvariant part,
we get the following commutative diagram
\begin{equation}
\xymatrix@C-13pt{
0\ar[r] & H_i(\so_n)^{\sigma} \ar[r] \ar[d] \ar[dr]^{\rho_i} & H_i(\so_n) \ar[r]^{\proj} \ar[d] & \frac{H_i(\so_n)}{H_i(\so_n)^{\sigma}} \ar[r] \ar[d] & 0\\
 & H_i(\so_n)^{\sigma}\otimes_{\bb{Z}[\bb{Z}/2]}\bb{Z} \ar[r] & H_i(\so_n)\otimes_{\bb{Z}[\bb{Z}/2]}\bb{Z} \ar[r]_{\proj_{\sigma}} & \left(\frac{H_i(\so_n)}{H_i(\so_n)^{\sigma}}\right)\otimes_{\bb{Z}[\bb{Z}/2]}\bb{Z} \ar[r] & 0
}
\end{equation}
where $\proj$ is the natural projection and $\proj_{\sigma}=\proj\otimes_{\bb{Z}[\bb{Z}/2]}\bb{Z}$, horizontal sequences are exact, and each vertical map sends $x$ to $x\otimes 1$.

Next we see that each element in the module $(H_i(\so_n)/H_i(\so_n)^{\sigma})_{\sigma}$ is annihilated by $2$.

\begin{clm}\label{lem9}
There exists the isomorphism
\begin{equation*}
\left(\frac{H_i(\so_n)}{H_i(\so_n)^{\sigma}}\right)_{\sigma}\cong\frac{H_i(\so_n)}{H_i(\so_n)^{\sigma}}\bigg/2\frac{H_i(\so_n)}{H_i(\so_n)^{\sigma}}.
\end{equation*}
\end{clm}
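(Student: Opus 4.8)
The plan is to observe that the induced involution $\sigma$ on the quotient $M := H_i(\so_n)/H_i(\so_n)^{\sigma}$ acts as multiplication by $-1$, so that passing to $\sigma$-coinvariants is the same as reducing modulo $2$.

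First I would fix notation and recall definitions. Write $M = H_i(\so_n)/H_i(\so_n)^{\sigma}$, which is a $\bb{Z}[\bb{Z}/2]$-module via the involution $\sigma$ that, as noted just before the statement, descends to $M$. By definition the $\sigma$-coinvariant part is $M_{\sigma} = M\otimes_{\bb{Z}[\bb{Z}/2]}\bb{Z} = M/(\sigma-1)M$, where $(\sigma-1)M$ denotes the subgroup of $M$ generated by all elements $\sigma m - m$ with $m\in M$.

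The key step is to show that $\sigma$ acts as $-\id$ on $M$. Given $x\in H_i(\so_n)$, the element $x+\sigma x$ is $\sigma$-invariant, since $\sigma(x+\sigma x) = \sigma x + \sigma^2 x = \sigma x + x$; hence $x + \sigma x \in H_i(\so_n)^{\sigma}$. Writing $\bar x$ for the image of $x$ in $M$, this gives $\bar x + \sigma\bar x = \overline{x+\sigma x} = 0$, i.e. $\sigma\bar x = -\bar x$. Consequently, for every $m\in M$ we have $\sigma m - m = -2m$, so $(\sigma-1)M = 2M$, and therefore $M_{\sigma} = M/(\sigma-1)M = M/2M$, which is precisely the asserted isomorphism.

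I do not expect any genuine obstacle here: the argument is essentially formal once the involution on $M$ is available. The only point requiring a little care is the well-definedness of $\sigma$ on the quotient $M = H_i(\so_n)/H_i(\so_n)^{\sigma}$, and this has already been recorded immediately preceding the statement, so it may simply be invoked.
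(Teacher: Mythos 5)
Your proof is correct and takes essentially the same approach as the paper: both rest on the observation that $(1+\sigma)a$ is $\sigma$-invariant, which the paper uses to show $\tau = \id - \sigma$ acts as multiplication by $2$ on the quotient, while you phrase it as $\sigma$ acting as $-\id$ there; these formulations are interchangeable.
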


Generally, from each left $\bb{Z}$-module $M$ with involution $\sigma$ we have the short exact sequence
\begin{equation}
\xymatrix{
0\ar[r] & M^{\sigma}\ar[r]^{\text{inclusion}} & M\ar[r]^{\tau} & M\ar[r]^{\proj} & M_{\sigma}\ar[r] & 0
}
\end{equation}
Here $\tau=\id-\sigma$.
In case $M=H_i(\so_n)/H_i(\so_n)^{\sigma}$, we get the following exact sequence
\begin{equation}\label{exseq}
0\rightarrow\left(\frac{H_i(\so_n)}{H_i(\so_n)^{\sigma}}\right)^{\sigma}\rightarrow\frac{H_i(\so_n)}{H_i(\so_n)^{\sigma}}\xrightarrow{\tau}\frac{H_i(\so_n)}{H_i(\so_n)^{\sigma}}\rightarrow\left(\frac{H_i(\so_n)}{H_i(\so_n)^{\sigma}}\right)_{\sigma}\rightarrow 0.
\end{equation}

Observe that $\tau\colon H_i(\so_n)/H_i(\so_n)^{\sigma}\rightarrow H_i(\so_n)/H_i(\so_n)^{\sigma}$
is multiplication by $2$.
For this one may use the fact that any $a\in H_i(\so_n)$ we have $(1+\sigma)a\in H_i(\so_n)^{\sigma}$.
This is because
\begin{equation*}
\sigma(1+\sigma)a=(\sigma+\sigma^2)a=(\sigma+1)a=(1+\sigma)a.
\end{equation*}
Hence if we denote the class of $a$ in $H_i(\so_n)/H_i(\so_n)^{\sigma}$ by $[a]$,
\begin{equation*}
\tau [a]=(1-\sigma)[a]=[(1-\sigma)a]+[(1+\sigma)a]=[(1-\sigma)a+(1+\sigma)a]=[2a]=2[a]
\end{equation*}
in $H_i(\so_n)/H_i(\so_n)^{\sigma}$.

Therefore using the exact sequence \ref{exseq} we have
\begin{eqnarray*}
\left(\frac{H_i(\so_n)}{H_i(\so_n)^{\sigma}}\right)_{\sigma}&\cong&\Coker\tau\\
&=&\frac{H_i(\so_n)}{H_i(\so_n)^{\sigma}}\bigg/\Im\tau\\
&=&\frac{H_i(\so_n)}{H_i(\so_n)^{\sigma}}\bigg/2\frac{H_i(\so_n)}{H_i(\so_n)^{\sigma}}.
\end{eqnarray*}

Now if we use the claims of \ref{lem8} and \ref{lem9} we have the following isomorphisms:
\begin{equation*}
\frac{H_i(\so_n)_{\sigma}}{\Im\rho_i}\cong\left(\frac{H_i(\so_n)}{H_i(\so_n)^{\sigma}}\right)_{\sigma}\cong\frac{H_i(\so_n)}{H_i(\so_n)^{\sigma}}\bigg/2\frac{H_i(\so_n)}{H_i(\so_n)^{\sigma}},
\end{equation*}
and hence we get lemma \ref{lemB}.

\section{Proof of theorem \ref{mainthm1}}\label{proofofthm1}
If $H_i(\inc)_{\sigma}\colon H_i(\so_n)_{\sigma}\rightarrow H_i(\so_{n+1})_{\sigma}$ is surjective, then
\begin{equation*}
\Coker\{\rho_i\colon H_i(\so_{n+1})^{\sigma}\rightarrow H_i(\so_{n+1})_{\sigma}\}=0.
\end{equation*}
Adding lemma \ref{lemB}, we have
\begin{equation*}
\frac{H_i(\so_{n+1})}{H_i(\so_{n+1})^{\sigma}}\bigg/2\frac{H_i(\so_{n+1})}{H_i(\so_{n+1})^{\sigma}}\cong\Coker\{\rho_i\colon H_i(\so_{n+1})^{\sigma}\rightarrow H_i(\so_n)_{\sigma}\}=0.
\end{equation*}
Therefore $H_i(\so_{n+1})/H_i(\so_{n+1})^{\sigma}$ has neither torsion elements whose orders are devided by $2$ nor torsion-free elements.

Now we get the following:
\begin{lem}\label{lem10}
If the map $H_i(\so_n)_{\sigma}\rightarrow H_i(\so_{n+1})_{\sigma}$ is bijective, then
\begin{equation*}
\frac{H_i(\so_{n+1})}{H_i(\so_{n+1})^{\sigma}}\otimes_{\bb{Z}}\bb{Z}/2=0
\end{equation*}
\end{lem}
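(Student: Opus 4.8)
The plan is to obtain this as an immediate consequence of Lemma \ref{lem11} and Lemma \ref{lemB}, specialised to degree $n+1$. No new construction is needed; the lemma is a repackaging of results already established in Sections \ref{specseq} and \ref{strsec}.

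First I would observe that a bijection is in particular a surjection, so the hypothesis says precisely that $H_i(\inc)_{\sigma}\colon H_i(\so_n)_{\sigma}\to H_i(\so_{n+1})_{\sigma}$ is surjective. Feeding this into Lemma \ref{lem11} gives
\[
\Coker\{\rho_i\colon H_i(\so_{n+1})^{\sigma}\to H_i(\so_{n+1})_{\sigma}\}=0 .
\]
Then I would apply Lemma \ref{lemB}, which holds for every $n$ and every $i$ and hence in particular with $n$ replaced by $n+1$, to identify
\[
\left(H_i(\so_{n+1})/H_i(\so_{n+1})^{\sigma}\right)\otimes_{\bb{Z}}\bb{Z}/2\;\cong\;\Coker\rho_i ,
\]
where $\rho_i$ is the composite $H_i(\so_{n+1})^{\sigma}\to H_i(\so_{n+1})\to H_i(\so_{n+1})_{\sigma}$ of the inclusion of the $\sigma$-invariant part with the projection onto the $\sigma$-coinvariant part, that is, exactly the map whose cokernel was just shown to vanish. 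Combining the two displays yields $\left(H_i(\so_{n+1})/H_i(\so_{n+1})^{\sigma}\right)\otimes_{\bb{Z}}\bb{Z}/2=0$, as claimed.

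Since both ingredients are already in hand, there is essentially no obstacle at this final step; the substantive work lies upstream — in the spectral-sequence comparison behind Lemma \ref{lemA} and Lemma \ref{lem11} (which is what will eventually supply the surjectivity hypothesis in the applications), and in the identification $\tau=\id-\sigma=2$ underlying Lemma \ref{lemB}. The only point that requires care is the index bookkeeping: one must check that the $\rho_i$ appearing in Lemma \ref{lem11} coincides with the $\rho_i$ produced by Lemma \ref{lemB} when the latter is read in degree $n+1$, which is immediate from the definition \eqref{rho_i}. As already noted just before the statement, the vanishing of $\left(H_i(\so_{n+1})/H_i(\so_{n+1})^{\sigma}\right)\otimes_{\bb{Z}}\bb{Z}/2$ can be unpacked further (absence of $2$-torsion and of a free part in $H_i(\so_{n+1})/H_i(\so_{n+1})^{\sigma}$), but that refinement is not needed for the lemma itself.
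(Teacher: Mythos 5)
Your proof is correct and follows exactly the paper's own argument: surjectivity (a fortiori from bijectivity) feeds into Lemma \ref{lem11} to give $\Coker\rho_i=0$ on $H_i(\so_{n+1})$, and Lemma \ref{lemB} applied in degree $n+1$ identifies $\left(H_i(\so_{n+1})/H_i(\so_{n+1})^{\sigma}\right)\otimes_{\bb{Z}}\bb{Z}/2$ with that cokernel. Nothing further is needed.
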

From lemma \ref{lemA}, the sufficient condition to satisfy the assumption of lemma \ref{lem10} is (\ref{onstab3}) and (\ref{sonstab3}).

We already have a result concerning to (\ref{onstab3}).
Recall that Cathelineau proved in \cite{cat2007} that
\begin{equation}\label{catsonstab}
\text{$H_i(\o_n)\rightarrow H_i(\o_{n+1})$ is bijective for $i<n$ and surjective for $i\leq n$}.
\end{equation}

Cathelineau proved in \cite{cat2007} that
\begin{equation}\label{catsonstr}
H_i(\so_n,\bb{Z}[1/2])_{\sigma}\cong H_i(\so_n,\bb{Z}[1/2])\quad\text{for $2i<n$}.
\end{equation}
Notice that with coefficient $\bb{Z}[1/2]$ we have $H_i(\so_n,\bb{Z}[1/2])^{\sigma}\cong H_i(\so_n,\bb{Z}[1/2])_{\sigma}$.

Because $\bb{Z}\left[1/2\right]$ is $\bb{Z}$-flat module,
we have that the sequence
\begin{equation*}
0\rightarrow H_i(\so_n)^{\sigma}\otimes_{\bb{Z}}\bb{Z}\left[1/2\right]\rightarrow H_i(\so_n)\otimes_{\bb{Z}}\bb{Z}\left[1/2\right]\rightarrow \left(\frac{H_i(\so_n)}{H_i(\so_n)^{\sigma}}\right)\otimes_{\bb{Z}}\bb{Z}\left[1/2\right]\rightarrow 0
\end{equation*}
is still exact.
Now we have that $H_i(\so_n)^{\sigma}\otimes_{\bb{Z}}\bb{Z}\left[1/2\right]\cong H_i(\so_n,\bb{Z}\left[1/2\right])^{\sigma}$ and
$H_i(\so_n)\otimes_{\bb{Z}}\bb{Z}\left[1/2\right]\cong H_i(\so_n,\bb{Z}\left[1/2\right])$, and therefore from the above exact sequence we get that
\begin{equation*}
\left(\frac{H_i(\so_n)}{H_i(\so_n)^{\sigma}}\right)\otimes_{\bb{Z}}\bb{Z}\left[1/2\right]\cong\frac{H_i(\so_n,\bb{Z}\left[1/2\right])}{H_i(\so_n,\bb{Z}\left[1/2\right])^{\sigma}}.
\end{equation*}

Hence we obtain the following reformulated Cathelineau's formula:
\begin{equation}\label{catssoninc}
\frac{H_i(\so_n)}{H_i(\so_n)^{\sigma}}\otimes_{\bb{Z}}\bb{Z}[1/2]=0\quad \text{for $2i<n$.} 
\end{equation}

We start the proof of theorem \ref{mainthm1} inductively.

First, the map
\begin{equation*}
H_0(\so_0)\xrightarrow{\cong} H_0(\so_1)\xrightarrow{\cong} H_0(\so_2)\xrightarrow{\cong}\cdots
\end{equation*}
are all bijective since we have $H_0(\so_n)=\bb{Z}$ for any $n$ and maps are natural.
Here we set $\so_0=\text{trivial group}$.

From (\ref{catsonstab}), the homological stability of $\o_n$ at $H_1$ is the following:
\begin{equation*}
0\cong H_1(\o_0)\rightarrow H_1(\o_1)\twoheadrightarrow H_1(\o_2)\xrightarrow{\cong}H_1(\o_3)\xrightarrow{\cong}\cdots.
\end{equation*}
We have, from lemma \ref{lemA}, the homological stability of $H_1(\so_n)_{\sigma}$ below:
\begin{equation*}
H_1(\so_1)_{\sigma}\rightarrow H_1(\so_2)_{\sigma}\xrightarrow{\cong}H_1(\so_3)_{\sigma}\xrightarrow{\cong}\cdots.
\end{equation*}
Thus from lemma \ref{lem10} we get that
\begin{equation}\label{mod2}
\frac{H_1(\so_m)}{H_1(\so_m)^{\sigma}}\bigg/2\frac{H_1(\so_m)}{H_1(\so_m)^{\sigma}}=0\text{ for $m>2$.}
\end{equation}
We also have from Cathelineau's theorem that
\begin{equation}\label{1/2attached}
\frac{H_1(\so_m)}{H_1(\so_m)^{\sigma}}\otimes_{\bb{Z}}\bb{Z}\left[1/2\right]=0\text{ for $m>2$.}
\end{equation}
Combining (\ref{mod2}) and (\ref{1/2attached}), we have that
\begin{equation}\label{total}
\frac{H_1(\so_m)}{H_1(\so_m)^{\sigma}}=0\text{ for $m>2$.}
\end{equation}
The condition (\ref{total}) means the existence of following isomorphisms:
\begin{equation*}
H_1(\so_m)^{\sigma}\xrightarrow{\cong} H_1(\so_m)\xrightarrow{\cong} H_1(\so_m)_{\sigma}.
\end{equation*}

Hence we get the following diagram.
\begin{equation*}
\xymatrix{
H_1(\so_2)_{\sigma}\ar[r]^{\cong} & H_1(\so_3)_{\sigma}\ar[r]^{\cong} & H_1(\so_4)_{\sigma}\ar[r]^{\cong} & \cdots \\
H_1(\so_2)\ar[r]\ar@{->>}[u] & H_1(\so_3)\ar[r]\ar[u]_{\cong} & H_1(\so_4)\ar[r]\ar[u]_{\cong} & \cdots \\
H_1(\so_2)^{\sigma}\ar@{^{(}->}[u] & H_1(\so_3)^{\sigma}\ar[u]\ar[u]_{\cong} & H_1(\so_4)^{\sigma}\ar[u]\ar[u]_{\cong} 
}
\end{equation*}
From the above diagram we have the stability of $H_1(\so_n)$ is
\begin{equation}\label{h1stabofso}
H_1(\so_2)\twoheadrightarrow H_1(\so_3)\xrightarrow{\cong} H_1(\so_4)\xrightarrow{\cong}\cdots
\end{equation}
and therefore we get that the map
\begin{equation*}
H_1(\so_n)\rightarrow H_1(\so_{n+1})
\end{equation*}
is surjective for $n\geq 2$ and bijective for $n>2$.

Next we see the homological stability of $H_2(\so_n)$.
From (\ref{catsonstab}), the homological stability of $\o_n$ at $H_2$ is the following:
\begin{equation*}
H_2(\o_2)\twoheadrightarrow H_2(\o_3)\xrightarrow{\cong} H_2(\o_4)\xrightarrow{\cong} \cdots.
\end{equation*}
Using this with the stability (\ref{h1stabofso}) of $H_1(\so_n)$, we have
\begin{equation*}
H_2(\so_3)_{\sigma}\xrightarrow{\cong} H_2(\so_4)_{\sigma}\xrightarrow{\cong}H_2(\so_5)_{\sigma}\xrightarrow{\cong} \cdots
\end{equation*}
from lemma \ref{lemA}.

Since we have the condition
\begin{equation*}
\frac{H_2(\so_m)}{H_2(\so_m)^{\sigma}}=0
\end{equation*}
for $m>4$ as in the case of $H_1$, again we get the diagram
\begin{equation*}
\xymatrix{
H_2(\so_4)_{\sigma}\ar[r]^{\cong} & H_2(\so_4)_{\sigma}\ar[r]^{\cong} & H_2(\so_4)_{\sigma}\ar[r]^{\cong} & \cdots \\
H_2(\so_4)\ar[r]\ar@{->>}[u] & H_2(\so_4)\ar[r]\ar[u]_{\cong} & H_2(\so_4)\ar[r]\ar[u]_{\cong} & \cdots \\
H_2(\so_4)^{\sigma}\ar@{^{(}->}[u] & H_2(\so_4)^{\sigma}\ar[u]_{\cong} & H_2(\so_4)^{\sigma}\ar[u]_{\cong}
}
\end{equation*}
Therefore we get that the map
\begin{equation*}
H_2(\so_n)\rightarrow H_2(\so_{n+1})
\end{equation*}
is surjective for $n\geq 4$ and bijective for $n>4$.

Now we finish the proof of main theorem \ref{mainthm1} by repeating this argument inductively.\qed

\begin{rem}\label{sonbound}
Cathelineau showed in \cite[Theorem 1.3 and Theorem 1.5]{cat2007} that if $F$ is a quadratically closed field,
the kernel of $H_n(\so_{2n},\bb{Z}\left[1/2\right])\rightarrow H_n(\so_{2n+1},\bb{Z}\left[1/2\right])$
is the $\bb{Z}\left[1/2\right]$-tensored Milnor $K$-group $K_n^{M}(F)\otimes\bb{Z}\left[1/2\right]$ of the field $F$.
\end{rem}

We refer to corollary \ref{strofson}.
Notice that in the proof of theorem \ref{mainthm1}, we also get a result about the structure of the homology group of special orthogonal groups.
Let us recall that when the degree of homology is equal to $n$, we get the following commutative diagram.
\begin{equation*}
\xymatrix{
H_n(\so_{2n})_{\sigma}\ar[r]^{\cong} & H_n(\so_{2n+1})_{\sigma}\ar[r]^{\cong} & H_n(\so_{2n+2})_{\sigma}\ar[r]^{\cong} & \cdots \\
H_n(\so_{2n})\ar[r]\ar@{->>}[u] & H_n(\so_{2n+1})\ar[r]\ar[u]_{\cong} & H_n(\so_{2n+2})\ar[r]\ar[u]_{\cong} & \cdots \\
H_n(\so_{2n})^{\sigma}\ar@{^{(}->}[u] & H_n(\so_{2n+1})^{\sigma}\ar[u]_{\cong} & H_n(\so_{2n+2})^{\sigma}\ar[u]_{\cong}
}
\end{equation*}
To prove that the bijectivity of the arrows $H_n(\so_m)^{\sigma}\rightarrow H_n(\so_m)$ for $m\geq 2n+1$,
we inductively proved and used the fact that $H_n(\so_m)/H_n(\so_m)^{\sigma}=0$ for $m\geq 2n+1$.
This implies that for $m\geq 2n+1$ the homology $H_n(\so_m)$ entirely consists of its $\sigma$-invariant part.

\section{Applications}
In this section we will prove theorem \ref{mainthm2} and get some applications.

\subsection{Proof of theorem \ref{mainthm2}}

As before we consider $\bb{Z}$ as $\bb{Z}\o_n$-module with trivial $\o_n$-action,
and $\bb{Z}^t$ as determinant-twisted $\bb{Z}\o_n$-module.
We consider the group ring $\bb{Z}[\bb{Z}/2]$ as an $\o_n$-module.
$\bb{Z}[\bb{Z}/2]$ has generators $\epsilon$ and $\sigma$
as $\bb{Z}$-module and we define an $\o_n$-action on $\bb{Z}[\bb{Z}/2]$ defined by
\begin{equation}
g\acts\epsilon=\begin{cases}\epsilon,\ \text{if $\det g=1$},\\ \sigma,\ \text{if $\det g=-1$},\end{cases}\quad g\acts\sigma=\begin{cases}\sigma,\ \text{if $\det g=1$, and}\\ \epsilon,\ \text{if $\det g=-1$}.\end{cases}
\end{equation}
Then we have the following two short exact sequences of $\o_n$-modules:
\begin{equation}\label{seq1}
1\rightarrow \bb{Z}^t\rightarrow\bb{Z}[\bb{Z}/2]\rightarrow \bb{Z}\rightarrow 1,
\end{equation}
where the left map sends $1$ to $\epsilon-\sigma$, and
\begin{equation}\label{seq2}
1\rightarrow \bb{Z}\rightarrow\bb{Z}[\bb{Z}/2]\rightarrow \bb{Z}^t\rightarrow 1,
\end{equation}
where the left map sends $1$ to $\epsilon+\sigma$.

Notice that from Shapiro's lemma (see \cite[Lemma 5.5]{dupont2001}) we have
\begin{equation}\label{shaplem}
H_i(\o_n,\bb{Z}[\bb{Z}/2])\cong H_i(\so_n)
\end{equation}
for any $i$ and $n$.

From the sequence (\ref{seq1}), we get the Bockstein long exact sequence
\begin{equation}\label{bseq1}
\cdots\rightarrow H_{i+1}(\o_n)\rightarrow H_i(\o_n,\bb{Z}^t)\rightarrow H_i(\so_n)\rightarrow H_i(\o_n)\rightarrow H_{i-1}(\o_n,\bb{Z}^t)\rightarrow\cdots,
\end{equation}
and from (\ref{seq2}) we get
\begin{equation}\label{bseq2}
\cdots\rightarrow H_{i+1}(\o_n,\bb{Z}^t)\rightarrow H_i(\o_n)\rightarrow H_i(\so_n)\rightarrow H_i(\o_n,\bb{Z}^t)\rightarrow H_{i-1}(\o_n)\rightarrow\cdots,
\end{equation}
respectively.

\begin{rem}
We can check easily that the map $H_i(\o_n)\rightarrow H_i(\so_n)$
induced from $\bb{Z}\rightarrow \bb{Z}[\bb{Z}/2]$
in \eqref{seq2} is the transfer map \cite[9 of Chapter III]{brown1982}.
\end{rem}

Let us see the following commutative diagram:
\begin{eqnarray*}
\xymatrix@C-15pt{
\cdots \ar[r] & H_{n+1}(\o_{2n},\bb{Z}^t) \ar[r] \ar[d] & H_{n+1}(\so_{2n}) \ar[r] \ar[d] & H_{n+1}(\o_{2n}) \ar[r] \ar[d] & H_n(\o_{2n},\bb{Z}^t) \ar[d] \\
\cdots \ar[r] & H_{n+1}(\o_{2n+1},\bb{Z}^t) \ar[r] & H_{n+1}(\so_{2n+1}) \ar[r] & H_{n+1}(\o_{2n+1}) \ar[r] & H_n(\o_{2n+1},\bb{Z}^t)
}\\
\xymatrix@C-15pt{
 \ar[r] & H_n(\so_{2n}) \ar[r] \ar[d] & H_n(\o_{2n}) \ar[r] \ar[d] & \cdots \\
 \ar[r] & H_n(\so_{2n+1}) \ar[r] & H_n(\o_{2n+1}) \ar[r] & \cdots
}
\end{eqnarray*}

where the horizontal sequences are (\ref{bseq1}) and the vertical maps are stability maps.

We already have stability results (proposition \ref{catprop} and theorem \ref{mainthm1}).
Therefore, using five lemma, we obtain theorem \ref{mainthm2}.

\begin{rem}
If the range can be expanded, from the Bockstein exact sequence (\ref{bseq2}),
we can expand the stability range of $H_{\ast}(\so_n)$.
But this may be false because the kernel of $H_n(\so_{2n})\rightarrow H_n(\so_{2n+1})$ may not be trivial (see remark \ref{sonbound}).
\end{rem}

\subsection{Some applications}

First we see the stability map $H_i(\inc,\bb{Z}^t)\colon H_i(\o_n,\bb{Z}^t)\rightarrow H_i(\o_{n+1},\bb{Z}^t)$.

Let $C_l$ be the free abelian group generated by the set
of all ordered $(l+1)$-tuples (denoted by $(v_0,\ldots,v_l)$) of points of $n$-dimensional unit sphere $S=S(F^{n+1})$
with the understanding that such an $l$-cell is zero if $v_0=v_1$,
and let $\partial$ be
\begin{equation*}
\partial(v_0,\ldots,v_l)=\sum_{j=0}^l(-1)^j(v_0,\ldots,\widehat{v_j},\ldots,v_l).
\end{equation*}
$\o_{n+1}$ acts diagonally on $C_l$.
$(C_{\ast},\partial)$ is a chain complex of $\o_{n+1}$-modules, which is acyclic with augmentation $\bb{Z}$.

We define a spectral sequence ${}^{\prime}E_{p,q}^1$ as the hyperhomology $H_p(\o_{n+1},C_q^t)$ which converges to the homology $H_{p+q}(\o_n,\bb{Z}^t)$.
Observe that
\begin{equation}\label{row0}
{}^{\prime}E_{i,0}^1\cong H_i(\o_n,\bb{Z}^t) \quad\text{for $i\geq 0$}.
\end{equation}
To see this, we use Shapiro's lemma; we have $H_i(\o_{n+1},C_0^t)\cong H_i(\Stab(v_0),(\bb{Z}(v_0))^t)$,
where $\Stab(v_0)$ is the stabilizer at $v_0\in S$, since $\o_{n+1}$ acts on $S$ transitively.
From Witt's theorem $\Stab(v_0)\cong \o_n$ for any $v_0$, and we get (\ref{row0}).

Next we calculate ${}^{\prime}E_{i,0}^2$.
From Shapiro's lemma we get
\begin{equation*}
{}^{\prime}E_{i,1}^1\cong \bigoplus_{(v_0,v_1)} H_i(\Stab((v_0,v_1)),\bb{Z}^t)\otimes \bb{Z}(v_0,v_1)
\end{equation*}
where $(v_0,v_1)$ runs all representatives of the set of $1$-cells decomposed to $\o_{n+1}$-orbits.
The differential $d^1\colon {}^{\prime}E_{i,1}^1\rightarrow {}^{\prime}E_{i,0}^1$ sends $c\otimes(v_0,v_1)$ to $c\otimes (v_1)-c\otimes(v_0)$,
where $c$ is an $i$-cycle of $\Stab((v_0,v_1))$.
From Shapiro's lemma in reverse, $c\otimes(v_i)$ represents an element of $H_i(\o_{n+1},C_0^t)$ for each $i=0,1$.
If $v_0$ and $v_1$ are linearly independent, then, from Witt's theorem, we can find an element $g\in \o_{n+1}$
which sends $v_0$ to $v_1$ and centralizes $\Stab((v_0,v_1))$.
Moreover we can find such element $g$ in $\so_{n+1}$ inverting its determinant if neccesary.
so that $c\otimes (v_1)-c\otimes(v_0)$ is homologous to zero.
If $v_0=-v_1$, then we get $c\otimes (v_1)-c\otimes(v_0)$ is homologous to $2c\otimes (v_1)$
because the determinant of the reflection of $v_0$ is $-1$ and this reflection centralizes $\Stab((v_0,v_1))$.
Therefore we obtain the following:
\begin{equation}\label{twisted2torsions}
{}^{\prime}E_{i,0}^2\cong H_i(\o_n,\bb{Z}^t)\otimes_{\bb{Z}} \bb{Z}/2 \quad \text{for $i\geq 0$}.
\end{equation}
Hence we obtain from (\ref{twisted2torsions}) that
\begin{equation}\label{quotfactor}
\text{$H_i(\o_n,\bb{Z}^t)\rightarrow H_i(\o_{n+1},\bb{Z}^t)$ factors through the quotient $H_i(\o_n,\bb{Z}^t)\otimes_{\bb{Z}} \bb{Z}/2$}.
\end{equation}

Applying this result (\ref{quotfactor}) to the stability theorem \ref{mainthm2}, we get the following result.
\begin{cor}
$H_i(\o_n,\bb{Z}^t)\cong H_i(\o_n,\bb{Z}^t)\otimes_{\bb{Z}}\bb{Z}/2$ for $2i<n$.
\end{cor}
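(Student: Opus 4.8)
The plan is to deduce this from Theorem~\ref{mainthm2} together with the factorization~(\ref{quotfactor}). Fix $i$ and $n$ with $2i<n$, and write $M=H_i(\o_n,\bb{Z}^t)$ for brevity. By Theorem~\ref{mainthm2} the stability map $H_i(\inc,\bb{Z}^t)\colon M\rightarrow H_i(\o_{n+1},\bb{Z}^t)$ is bijective; in particular it is injective, and this injectivity is all that the argument will use, so the full strength of Theorem~\ref{mainthm2} is not needed here.

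By~(\ref{quotfactor}) this stability map factors as
\begin{equation*}
M\xrightarrow{q} M\otimes_{\bb{Z}}\bb{Z}/2\xrightarrow{\bar{f}} H_i(\o_{n+1},\bb{Z}^t),
\end{equation*}
where $q$ is the canonical surjection $x\mapsto x\otimes 1$; indeed, via the identifications~(\ref{row0}) and~(\ref{twisted2torsions}) the map $q$ is precisely the edge homomorphism ${}^{\prime}E^1_{i,0}\twoheadrightarrow{}^{\prime}E^2_{i,0}$ of the spectral sequence, whose kernel is $2M$. Since $\bar{f}\circ q=H_i(\inc,\bb{Z}^t)$ is injective, $q$ is injective; being also surjective, $q$ is an isomorphism. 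Equivalently, $2M=\Ker q=0$, so $M=M/2M=M\otimes_{\bb{Z}}\bb{Z}/2$, which is the claim.

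There is essentially no obstacle once Theorem~\ref{mainthm2} and~(\ref{quotfactor}) are in hand; the only point deserving care is the verification, already performed in the derivation of~(\ref{twisted2torsions}), that the map through which the stability map factors is the canonical projection $M\to M/2M$ rather than merely some surjection onto $M\otimes_{\bb{Z}}\bb{Z}/2$, since this identification is what makes the diagram-chase yield $\Ker q=2M=0$.
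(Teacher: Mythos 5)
Your proof is correct and follows the same route the paper takes (the paper's own proof is essentially a one-line deduction from \eqref{quotfactor} and Theorem \ref{mainthm2}); you have merely spelled out the diagram chase. The one point that genuinely needs verifying — that the factoring map in \eqref{quotfactor} is the canonical projection $M\to M/2M$ and not merely some surjection onto $M\otimes\bb{Z}/2$ — you correctly locate in the identification of ${}^{\prime}E^1_{i,0}\to{}^{\prime}E^2_{i,0}$ with the quotient by $\operatorname{Im}d^1=2M$ coming from \eqref{row0} and \eqref{twisted2torsions}, so the argument is complete.
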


Finally, we introduce a pair of useful results.
These results can be considered as the generalization of (\ref{catsonstr}).

The map $H_i(\so_n)\rightarrow H_i(\o_n)$ in Bockstein exact sequence (\ref{bseq1})
coincides the composite map $H_i(\so_n)\rightarrow H_i(\so_n)_{\sigma}\rightarrow H_i(\o_n)$
induced from Lyndon-Hochschild-Serre exact sequence associated to (\ref{ext}).
Recall that the map $H_i(\so_n)_{\sigma}\rightarrow H_i(\o_n)$ is injective (proved at (\ref{lem2})).
Then we get the following result from the sequence (\ref{bseq1}):
\begin{cor}
$\Coker\{H_i(\o_n,\bb{Z}^t)\rightarrow H_i(\so_n)\}\cong H_i(\so_n)_{\sigma}$ for $i\geq 0$,
where the map $H_i(\o_n,\bb{Z}^t)\rightarrow H_i(\so_n)$ is the map in (\ref{bseq1}).
\end{cor}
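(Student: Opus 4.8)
The plan is to read the cokernel directly off the Bockstein long exact sequence \eqref{bseq1}. By exactness at the term $H_i(\so_n)$, the image of the map $H_i(\o_n,\bb{Z}^t)\rightarrow H_i(\so_n)$ is precisely the kernel of the next map $\varphi\colon H_i(\so_n)\rightarrow H_i(\o_n)$, so that
\[
\Coker\{H_i(\o_n,\bb{Z}^t)\rightarrow H_i(\so_n)\}=H_i(\so_n)/\Ker\varphi\cong\Im\varphi .
\]
Thus everything reduces to identifying $\Im\varphi$.

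Next I would invoke the identification recorded just before the statement: the map $\varphi$ factors as the natural surjection $H_i(\so_n)\twoheadrightarrow H_i(\so_n)_{\sigma}$ onto the $\sigma$-coinvariants followed by the edge map $H_i(\so_n)_{\sigma}\rightarrow H_i(\o_n)$ of the Lyndon--Hochschild--Serre spectral sequence of the extension \eqref{ext}. Since the projection to coinvariants is surjective, $\Im\varphi$ coincides with the image of $H_i(\so_n)_{\sigma}\rightarrow H_i(\o_n)$; and this last map is injective, as was shown around \eqref{lem2}. Hence $\Im\varphi\cong H_i(\so_n)_{\sigma}$, and combining this with the display above yields the corollary. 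Note that no stability hypothesis enters, so the statement holds for all $i\geq 0$ and all $n$.

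The one point that genuinely needs care is the compatibility assertion that $\varphi$, which arises from the short exact sequence \eqref{seq1} of coefficient $\o_n$-modules, agrees (after the factorization) with the edge homomorphism coming from the group extension \eqref{ext}. Both maps are ultimately induced by the inclusion $\so_n\hookrightarrow\o_n$ together with the quotient $\o_n\twoheadrightarrow\bb{Z}/2$, and in a full write-up I would justify the agreement by comparing the two constructions on the level of the bar resolutions (equivalently, the associated double complexes), using Shapiro's lemma \eqref{shaplem} to match $H_i(\o_n,\bb{Z}[\bb{Z}/2])$ with $H_i(\so_n)$. This is the main — and essentially the only — technical step; the remainder is a formal consequence of exactness of \eqref{bseq1} and of the injectivity established at \eqref{lem2}.
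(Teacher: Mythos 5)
Your proposal is correct and follows essentially the same route as the paper: read the cokernel off the Bockstein sequence \eqref{bseq1} as the image of $\varphi\colon H_i(\so_n)\to H_i(\o_n)$, identify $\varphi$ with the corestriction $H_i(\so_n)\twoheadrightarrow H_i(\so_n)_\sigma\hookrightarrow H_i(\o_n)$, and invoke the injectivity of the second factor established at \eqref{lem2}. You are in fact slightly more careful than the paper, which simply asserts the coincidence of $\varphi$ with the Lyndon--Hochschild--Serre composite; your flagged compatibility step (best seen by noting that both maps agree with the transfer/corestriction for $\so_n\hookrightarrow\o_n$ under the Shapiro isomorphism \eqref{shaplem}, since $\bb{Z}[\bb{Z}/2]\to\bb{Z}$ in \eqref{seq1} is the augmentation) is exactly the point that deserves justification.
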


In the same way, from (\ref{bseq2}), we get the following:
\begin{cor}
$\Coker\{H_i(\o_n)\rightarrow H_i(\so_n)\}\cong (H_i(\so_n)^t)_{\sigma}$ for $i\geq 0$,
where $(H_i(\so_n)^t)_{\sigma}$ is determinant-twisted $\sigma$-invariant part.
\end{cor}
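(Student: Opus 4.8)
The plan is to mimic, with the twisted coefficient module $\bb{Z}^t$ in place of $\bb{Z}$, the argument that yielded the previous corollary. By exactness of the Bockstein sequence (\ref{bseq2}), the cokernel $\Coker\{H_i(\o_n)\rightarrow H_i(\so_n)\}$ equals the image of the map $H_i(\so_n)\rightarrow H_i(\o_n,\bb{Z}^t)$ occurring in that sequence; so it suffices to show this map factors as a surjection $H_i(\so_n)\twoheadrightarrow(H_i(\so_n)^t)_{\sigma}$ followed by an injection $(H_i(\so_n)^t)_{\sigma}\hookrightarrow H_i(\o_n,\bb{Z}^t)$, for then its image is $(H_i(\so_n)^t)_{\sigma}$. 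This is the exact twisted analogue of the factorization of $H_i(\so_n)\rightarrow H_i(\o_n)$ through $H_i(\so_n)_{\sigma}$ recorded in the discussion of (\ref{bseq1}) above.

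To produce the factorization, first identify the map. The sequence (\ref{bseq2}) is the long exact homology sequence of (\ref{seq2}), so $H_i(\so_n)\rightarrow H_i(\o_n,\bb{Z}^t)$ is the map induced by the $\o_n$-module surjection $\bb{Z}[\bb{Z}/2]\rightarrow\bb{Z}^t$ of (\ref{seq2}) composed with the Shapiro isomorphism (\ref{shaplem}). Since $\bb{Z}[\bb{Z}/2]\cong\mathrm{Ind}_{\so_n}^{\o_n}(\mathrm{Res}\,\bb{Z}^t)$ and that surjection is the counit of the induction--restriction adjunction, this composite is nothing but the corestriction map $H_i(\so_n)=H_i(\so_n,\mathrm{Res}\,\bb{Z}^t)\rightarrow H_i(\o_n,\bb{Z}^t)$ attached to the inclusion $\so_n\hookrightarrow\o_n$. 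For the normal subgroup $\so_n\trianglelefteq\o_n$, the corestriction factors through the edge homomorphism of the Lyndon--Hochschild--Serre spectral sequence of (\ref{ext}) with coefficients in $\bb{Z}^t$, namely $E^2_{p,q}=H_p(\bb{Z}/2,H_q(\so_n,\mathrm{Res}\,\bb{Z}^t))\Rightarrow H_{p+q}(\o_n,\bb{Z}^t)$. As $\mathrm{Res}\,\bb{Z}^t=\bb{Z}$, the group $H_q(\so_n,\mathrm{Res}\,\bb{Z}^t)$ is $H_q(\so_n)$ carrying the residual $\bb{Z}/2$-action given by $\sigma$ twisted by the sign of the determinant, that is, the module $H_q(\so_n)^t$; hence $E^2_{0,q}=H_0(\bb{Z}/2,H_q(\so_n)^t)=(H_q(\so_n)^t)_{\sigma}$, and the corestriction factors as $H_q(\so_n)\twoheadrightarrow(H_q(\so_n)^t)_{\sigma}=E^2_{0,q}\twoheadrightarrow E^{\infty}_{0,q}\hookrightarrow H_q(\o_n,\bb{Z}^t)$.

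It remains to check that $E^2_{0,q}=E^{\infty}_{0,q}$, which is the bookkeeping around (\ref{lem2}) carried out with the twisted action. Using the double bar complex, $E^1_{1,q}$ is generated by the classes $[a]\otimes c$ with $a\in\bb{Z}/2$ and $c\in H_q(\so_n)$, the horizontal differential $\delta$ satisfies $\delta([a]\otimes c)=a\acts c-c$, and this class vanishes in the coinvariants $(H_q(\so_n)^t)_{\sigma}$; hence $E^2_{0,q}$ injects into $E^{\infty}_{0,q}$, so the two coincide. Thus the map $H_i(\so_n)\rightarrow H_i(\o_n,\bb{Z}^t)$ of (\ref{bseq2}) has the claimed factorization, and combining with the reduction above we conclude $\Coker\{H_i(\o_n)\rightarrow H_i(\so_n)\}\cong(H_i(\so_n)^t)_{\sigma}$.

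The only genuine content I anticipate is the identification of the map of (\ref{bseq2}) as the corestriction attached to $\so_n\hookrightarrow\o_n$ with coefficients $\bb{Z}^t$ (equivalently, as the edge map of the twisted Lyndon--Hochschild--Serre spectral sequence), together with the observation that the argument forcing $E^2_{0,q}=E^{\infty}_{0,q}$ is untouched by the twist. Both become routine once the untwisted analogues are in hand (the discussion around (\ref{lem2}) and the paragraph preceding the previous corollary), so the write-up should amount to little more than checking that the same reasoning applies verbatim with $\bb{Z}^t$ in place of $\bb{Z}$.
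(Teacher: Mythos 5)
Your argument is correct and is exactly what the paper's ``in the same way'' is gesturing at: you identify the map $H_i(\so_n)\rightarrow H_i(\o_n,\bb{Z}^t)$ of (\ref{bseq2}) with the edge map of the Lyndon--Hochschild--Serre spectral sequence of (\ref{ext}) with $\bb{Z}^t$-coefficients, observe that the residual $\bb{Z}/2$-action on $H_q(\so_n,\bb{Z}^t|_{\so_n})=H_q(\so_n)$ is $\sigma$ twisted by the determinant sign (so $E^2_{0,q}=(H_q(\so_n)^t)_\sigma$), and then rerun the (\ref{lem2}) argument to get $E^2_{0,q}=E^\infty_{0,q}$, from which the statement follows by exactness. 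Your version is more detailed than the paper's one-line proof (in particular spelling out the Shapiro/corestriction identification and the twisted coefficient bookkeeping), but the route is the same; note also that the word ``invariant'' in the paper's statement is a slip --- the subscript $\sigma$ means coinvariants, which is what your proof (correctly) produces.
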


\begin{appendix}
\end{appendix}

\bibliographystyle{plain}
\bibliography{ref}

\begin{thebibliography}{1}

\bibitem{brown1982}
K.~S. Brown.
\newblock {\em Cohomology of Groups}, volume~87 of {\em Grad. Texts in Math.}
\newblock Springer-Verlag, 1982.

\bibitem{cat2007}
J.-L. Cathelineau.
\newblock Homology stability for orthogonal groups over algebraically closed
  fields.
\newblock {\em Ann. Scient. {\'E}c. Norm. Sup.}, {\bfseries 40}:487--517, 2007.

\bibitem{dupont2001}
J.~L. Dupont.
\newblock {\em Scissors Congruence, Group Homology and Characteristic Classes}.
\newblock Nankai Tracts in Mathematics. World Scientific, 2001.

\bibitem{McCleary2001}
J.~McCleary.
\newblock {\em A User's Guide to Spectral Sequences}, volume~38 of {\em Stud.
  Adv. Math.}
\newblock Cambridge, 2001.

\bibitem{sah1986}
C.~H. Sah.
\newblock Homology of classical Lie groups made discrete, I. Stability theorems
  and Schur multipliers.
\newblock {\em Comment. Math. Helv.}, {\bfseries 61}:308--347, 1986.

\end{thebibliography}

\end{document}